\documentclass[12pt]{article} 

\usepackage{amsmath} 
\usepackage{amssymb} 
\usepackage{subfigure}

\usepackage{graphicx} 

\usepackage{cite}

\usepackage{color} 
\usepackage{float} 
\graphicspath{ {images/} }

\usepackage{setspace}
\usepackage{cite} 
\newtheorem{theorem}{Theorem}

\newtheorem{conjecture}{Conjecture}
\newtheorem{corollary}{Corollary}

\newtheorem{lemma}{Lemma}

\newtheorem{proposition}{Proposition}
\newtheorem{remark}{Remark}

\numberwithin{equation}{section}
\input{tcilatex}

\usepackage[dvipsnames]{xcolor}

\definecolor{tab:blue}{HTML}{1f77b4}
\definecolor{tab:orange}{HTML}{ff7f0e}
\definecolor{tab:green}{HTML}{2ca02c}
\definecolor{tab:red}{HTML}{d62728}
\definecolor{tab:purple}{HTML}{9467bd}
\definecolor{tab:brown}{HTML}{8c564b}
\definecolor{tab:pink}{HTML}{e377c2}
\definecolor{tab:gray}{HTML}{7f7f7f}
\definecolor{tab:olive}{HTML}{bcbd22}
\definecolor{tab:cyan}{HTML}{17becf}
\definecolor{RubineRed}{HTML}{ED017D}
\definecolor{Brown}{HTML}{792500}

\begin{document}

\begin{center}
{\Large
\textbf{Random walk in a non-homogeneous random environment on some random trees and the non-negative integers }
}
\\
Tamer Oraby%
\footnote{School of Mathematical and Statistical Sciences, The University of Texas Rio Grande Valley, Edinburg, Texas 78539, United States of America. \texttt{tamer.oraby@utrgv.edu}, Orcid: 0000-0002-8176-1324} \& András Telcs%
\footnote{HUN-REN Wigner RCP Budapest, University Pannonia, Veszprém, Hungary. \texttt{telcs.szit.bme@gmail.com}}
\end{center}
Dedicated to the soul of Dr. Mokhtar Konsowa.
\begin{abstract}
In this paper, we consider a random walk in a non-homogeneous random environment on two types of random trees and non-negative integers. The random trees here have progeny distributions that depend on the parent's generation. We provide a sharp threshold for the type of the random walk in a random environment on the random trees and the non-negative integers, up to the critical case. Then, we find the speed of the random walk on the non-negative integers and on random spherically symmetric trees. We use the method based on the relationship between the mean exit time from a vertex and speed. Meanwhile, we discuss several special cases that could be derived from those results. We also include many limiting formulas for sequences and series in the appendix, which are helpful for this study and others.
\end{abstract}

\section{Introduction\label{S1}}

We consider trees $\textbf{T}\equiv\textbf{(V,E)}$ without loops so that the distance between two vertices in $\textbf{V}$ is defined by the length (number of edges in $\textbf{E}$) of the shortest path that joins them. Let level $S_n$ be the set of vertices at a distance (or height) $n$ from the root $r$, which happens to be the only vertex in $S_0$. A tree is called a spherically symmetric tree ($\textbf{SST}$), if each vertex in level $S_n$ has the same number of children $d^+_n$ (also called the out-degree of a vertex). It is called a random spherically symmetric tree ($\textbf{RSST}$) if $d^+_n$'s are random variables with probability distribution $P(d^+_n=k)=p_{n,k}$ for $k=1,\ldots,D$; that is, the tree is locally bounded and grows without extinction. Note that $d^+_n$ is the same for vertices in generation $n$. A $k$-regular tree is a deterministic tree with $d^+_n=k-1$ for all $n$, and the degree of the root $r$ is equal to its out-degree $d^+_0=k$. A $k$-ary tree is a deterministic tree with $d^+_n=k$ for all $n$, and the degree of the root $r$ equals its out-degree, $d^+_0=k$. The line of nonnegative integers $\mathbb{Z}^+=\{0,1,2,\ldots\}$ is a unary tree with $r\equiv 0$. The line of integers $\mathbb{Z}$ is a 2-regular tree with $r\equiv 0$.

A random tree that is not spherically symmetric but equipped with the same out-degree probability distribution $P(d^+_{n,\ell}=k)=p_{n,k}$ for $k=1,\ldots,D$ and $\ell=1,\ldots, Z_n$; is the genealogical tree of the branching process in a varying environment ($\textbf{BPVET}$). Here, $Z_n$ is the size of the nth generation; see below. Note also that $d^+_{n,\ell}$ are i.i.d. random variables for fixed $n$. The Galton-Watson tree ($\textbf{GWT}$) is a special case of $\textbf{BPVET}$ when $p_{n,k} \equiv p_k$ for all $n$. See \cite{konsowa1991type,lyons1990random,lyons1992random,konsowa2003fractal}.

Let $(\Omega_d,\mathcal{F}_d,\mathbb{P})$ be the probability space to which the random out-degrees are defined. Let $\mathbb{E}_{\mathbb{P}}$ and $\mathbb{V}_{\mathbb{P}}$ be the expected value and variance with respect to $\mathbb{P}$.

In any tree, the ${n}^{th}$ canonical cutset of the tree, $\mathcal{E}_n$, is the set of all edges that connect the vertices of $S_{n-1}$ to the vertices of $S_n$. Let $Z_n:=|S_n|=|\mathcal{E}_n|$ denote the number of vertices in level $S_n$ or the number of edges in the canonical cutset. For $\textbf{RSST}$, $Z_n=\prod_{k=0}^{n-1}d^+_k$; and for $\textbf{BPVET}$, $Z_n=\sum_{\ell=1}^{Z_{n-1}} d^+_{n-1,\ell}$ where $d^+_{n-1,\ell}$ are identically distributed as $d^+_{n-1}$ for all $\ell$. 

It follows from \cite{konsowa2003fractal} that, for $\textbf{RSST}$, $Z_n \asymp n^\gamma$ $\mathbb{P}$-a.s. where $\gamma=\lim_n n \mathbb{E}_{\mathbb{P}}(\log d^+_n)$;\footnote{In this paper, $\lim_{n\rightarrow\infty}$ and $\sum_{n=1}^\infty$ will be abbreviated to $\lim_n$ and $\sum_n$ respectively, for brevity. Limits will also be represented by the arrow "$\to$" when the index approaching $\infty$ is clear. With $a_n \asymp b_n$, we mean $\lim_n\dfrac{\log(a_n)}{\log(b_n)}=1$. Almost surely is abbreviated a.s.} and for $\textbf{BPVET}$, $Z_n \asymp n^{\gamma_v}$ $\mathbb{P}$-a.s. where $\gamma_v=\lim_n n\log(\mathbb{E}_{\mathbb{P}}(d^+_n))$. The growth dimensions $\gamma$ and $\gamma_v$ are just the fractal dimensions $d_f$ of each tree less one; see \cite{T,konsowa2003fractal,konsowa2003dimensions} for a definition and more details. If $\textbf{RSST}$ or $\textbf{BPVET}$ are growing exponentially (that is, the growth measure of the tree $gr(T):=\lim_n Z_n/Z_{n-1}=: g > 1$ with a constant $g$), then $\gamma$ and $\gamma_v$ will be infinite. (We do not want to cause confusion when we use $a_n \asymp n^{\infty}$, but it actually means $\lim_n\dfrac{\log(a_n)}{\log(n)}=\infty$.) All the results below include the case where $\gamma$ and $\gamma_v$ are infinite. 

Let $(\Omega_w,\mathcal{F}_w,\mathbb{Q})$ be another probability space to which the weights (introduced below) are defined. Let $\mathbb{E}_{\mathbb{Q}}$ and $\mathbb{V}_{\mathbb{Q}}$ be the expected value and variance with respect to $\mathbb{Q}$. 

Each edge $e \in \mathcal{E}_n$ of the tree is endowed with different copies of a random positive weight $w_n$. Assume $w_0=1$. The random ratios $r_n:=w_n/w_{n-1}$ are assumed to be independent for all $n$. Let $\alpha:=\lim_n n\mathbb{E}_{\mathbb{Q}}(\log r_n)$ exist, but possibly infinite, positive, or negative. All the results shown in the sequel require that

\noindent \textbf{Condition A.}
$$\sum_{n=2}^\infty \frac{1}{(\log(n))^2} \; \mathbb{V}_{\mathbb{Q}}(\log(r_n))<\infty$$

An example of such an environment is a multiplicative cascade $w_n=\prod_{i=1}^n r_i$ defined by the independent random variables $r_n$ such that $r_n$ is equal to $\ell_n>0$ with probability $q_n$ and $m_n>0$ otherwise. Then $\mathbb{V}_{\mathbb{Q}}(\log(r_n))=(\log \ell_n -\log m_n)^2 \,q_n\,(1-q_n)$. Condition A is true if $\sum_{n=2}^\infty \frac{(\log \ell_n -\log m_n)^2\,q_n\,(1-q_n)}{(\log(n))^2}<\infty$. For example, $q_n=\dfrac{c}{n(\log n)^{1+\epsilon}}$ with constants $c,\epsilon>0$, and $\ell_n=c_1 n^a$ and $m_n=c_2 n^b$ with constants $a$ and $b$ possibly zero and constants $c_1$ and $c_2 \geq 1$, have Condition A satisfied.

For the tree-based statements in Theorems \ref{theorem1} and \ref{theorem2}, in which different edges within the same cutset are assigned independent copies of $w_k$, Condition A needs to be reinforced so that the shared growth exponent governs the entire expanding collection of level weights, rather than just one copy. Define $\xi_n:=\log r_n-\mathbb{E}_{\mathbb{Q}}(\log r_n)$ as the centered log-increments and set $m_n:=\mathbb{E}_{\mathbb{Q}}(\log w_n)=\sum_{i=1}^n\mathbb{E}_{\mathbb{Q}}(\log r_i)$; then, by Lemma \ref{limitlawsb3} (a), Condition A implies that $m_n/\log n\to\alpha$.

\noindent \textbf{Condition A$'$.} The increments $\{\xi_n\}$ are independent and sub-Gaussian; that is, there is a nonnegative sequence $\{\tau_n^2\}$ such that $\sum_{n=2}^\infty\frac{\tau_n^2}{\log n}<\infty$ and
$$\mathbb{E}_{\mathbb{Q}}\,e^{\lambda\xi_n}\leq e^{\lambda^2\tau_n^2/2},$$
for any $\lambda\in\mathbb{R}$ for all $n\geq 1$.

\begin{remark}
Condition A$'$ entails Condition A because $\mathbb{V}_{\mathbb{Q}}(\log r_n)\le \tau_n^2$ and $\sum_n \tau_n^2/(\log n)^2 \le \sum_n \tau_n^2/\log n < \infty$. In addition, writing $V_n:=\sum_{i=1}^n \tau_i^2$, Kronecker’s lemma applied to $\sum_n \tau_n^2/\log n<\infty$ yields
$V_n=o(\log n)$. If the environment is bounded, namely $|\xi_n|\le b_n$ a.s., then by Hoeffding’s lemma it is sub-Gaussian with parameter $\tau_n^2=b_n^2$, so Condition A$'$ becomes $\sum_n b_n^2/\log n<\infty$. For the multiplicative cascade described above with $q_n=c/(n(\log n)^{1+\epsilon})$, $\ell_n=c_1 n^a$, and $m_n=c_2 n^b$, one has $\mathbb{V}_{\mathbb{Q}}(\log r_n)\asymp (\log n)^{1-\epsilon}/n$, and therefore $\sum_{i\le k}\mathbb{V}_{\mathbb{Q}}(\log r_i)\asymp (\log k)^{2-\epsilon}$. When $a\ne b$ and $0<\epsilon<1$, this dominates $\log k$, so on a tree the cascade meets Condition A but \emph{fails} to satisfy
Condition A$'$. By contrast, on $\mathbb{Z}^+$, where only one copy at each level is needed, Condition A is sufficient in all arguments below.
\end{remark}

A random spherically symmetric tree equipped with a random environment $w$ will be denoted by $\textbf{RSST}w$. Similarly, a branching process on a varying-environment tree equipped with a random environment $w$ will be denoted by $\textbf{BPVET}w$.  Let $(\Omega_d\times\Omega_w,\mathcal{F},\mathbb{P}\times\mathbb{Q})$ be the product probability space of $(\Omega_d,\mathcal{F}_d,\mathbb{P})$ and $(\Omega_w,\mathcal{F}_w,\mathbb{Q})$. A statement that is $\mathbb{P}\times\mathbb{Q}$-a.s. follow from $\mathbb{P}$-a.s. and $\mathbb{Q}$-a.s. of two independent events.    

A simple random walk (RW) $\{X_k:k\geq 0\}$ starting at the root (that is, $X_0=r$) moves from one vertex to another over the edge connecting them (taking values in $\mathbf{V}$) with a probability proportional to the weight endowed to that edge. If those weights are random, the process is called a random walk in a random environment (RWRE). Let $\mathbb{RW}^r$ be the quenched law of the random walk starting at the root $r$ defined on $(\mathbf{V}^{\mathbb{N}^+},\mathcal{G}^r_R)$. Let $E_r(\cdot|w)$ and $V_r(\cdot|w)$ be the (quenched) expected value and variance with respect to $\mathbb{RW}^r$. We define the measure $\mathbb{S}_0$ on $(\mathbf{V}^{\mathbb{N}^+}\times\Omega_d\times\Omega_w,\mathcal{G}^r_R\times\mathcal{F})$ by $$\mathbb{S}_0(G\times F)=\int_F \mathbb{RW}^r(G) \; \mathbb{P}\times\mathbb{Q}(df)$$
The type of RWRE is quenched-recurrent if $\mathbb{RW}^r(\{X_n=r \, i.o. \})=1$ and transient if $\mathbb{RW}^r(\{X_n=r \, i.o. \})=0$.

The annealed measure is defined by $\mathbb{S}(\cdot):=\mathbb{S}_0(\cdot\times \Omega_d\times\Omega_w)$. Let $\mathbb{E}_{\mathbb{S}}$ and $\mathbb{V}_{\mathbb{S}}$ be the expected value and variance with respect to $\mathbb{S}$. The stochastic process with respect to $\mathbb{S}$ is not a Markov chain.

The speed of a random walk on a graph is defined to be the rate of
moving away from its starting vertex, which is the root $r$ in trees. That is, with $|X_{n}|$ measuring the distance of the random walk from the root $r$ at time $n$, the speed is defined by $v:=\lim_{n}
\frac{|X_{n}|}{n}$ provided that the limit exists. 
Now, let $T_{n}=\min\{k \geq 0:X_k\in S_n\}$ be the first hitting/passage time to the level $S_n$ of the tree by the random walk. 

The novelty of this paper is most clearly understood when placed alongside two lines of prior work. First, for Galton–Watson and more general trees, the nature and speed of random walks are controlled, following Lyons \cite{lyons1990random,lyons1992random}, by the branching number. Also, for a $\lambda$-biased walk, the critical bias is $\mathrm{br}(\mathbf{T})$. While this provides a sharp criterion for exponentially growing trees, it does not address the setting we consider, where the trees have polynomial growth, $Z_n\asymp n^{\gamma}$ (resp.\ $n^{\gamma_v}$) with $\gamma,\gamma_v<\infty$. In this case, the branching number is always one, and the classification instead depends on finer dimensional information, captured by the growth exponent \cite{T,konsowa2003fractal,konsowa2003dimensions}. Second, the Konsowa line \cite{konsowa1991type,K,K-F} characterizes the type and speed of walks on random spherically symmetric trees precisely in this polynomial-growth regime, but it treats only the geometry of the tree itself, assuming unit or deterministic weights.

Our central contribution is to connect these two sources of randomness and to identify the associated single sharp threshold. We allow (i) a \emph{non-homogeneous} offspring distribution that can change from one generation to the next ($\textbf{RSST}$ or $\textbf{BPVET}$) and (ii) an independent random edge environment $w_n=\prod_{i\le n}r_i$ with logarithmic drift $\alpha=\lim_n n\,\mathbb{E}_{\mathbb{Q}}(\log r_n)$. Theorems \ref{theorem1} and \ref{theorem2} establish a P\'olya-type dichotomy in which the tree geometry and the environmental drift appear symmetrically. Specifically, the two exponents combine additively: the walk is recurrent if $\gamma+\alpha<1$ (resp.\ $\gamma_v+\alpha<1$) and becomes transient once this sum exceeds one. Setting $w_n\equiv1$ yields the purely geometric results of \cite{konsowa1991type,K-F}, while $\gamma=0$ corresponds to the half-line $\mathbb{Z}^+$. For the speed, we study $\mathbb{Z}^+$ and $\textbf{RSST}w$ in an \emph{independent but not identically distributed} environment. Solomon's law and the Kesten-Kozlov-Spitzer exponent \cite{S,kesten1975limit} were originally obtained for i.i.d.\ environments. Moreover, Theorem \ref{theoremZ1} and Theorem \ref{theoremZ2} derive the speed $v=(1-s)/(1+s)$ using a mean-exit-time method through the electric-network framework, where $s=s_w$ on the line and $s=s_T s_w$ on the tree. Corollary \ref{cor1} retrieves the tree-speed statement from \cite{K}, and for the $2$-regular tree we recover Solomon's theorem on $\mathbb{Z}$.

\section{Main Results}

\begin{theorem}\label{theorem1}
The type of RWRE on $\textbf{RSST}w$, under Condition A$'$, which starts at $r$, with $\alpha>0$, is recurrent almost surely if $\gamma+\alpha<1$, and transient almost surely if $\gamma+\alpha>1$.
\end{theorem}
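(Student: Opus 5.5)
Our approach is the electric-network criterion: a reversible walk on a locally finite tree with edge conductances is recurrent iff the effective resistance from $r$ to infinity is infinite. We obtain both bounds from the canonical cutsets $\mathcal{E}_n$.

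\emph{Recurrence.} For $\gamma+\alpha<1$ we use the Nash-Williams inequality with the disjoint cutsets $\mathcal{E}_n$:
$$R_{\mathrm{eff}}\ \ge\ \sum_n \Big(\sum_{e\in\mathcal{E}_n} w(e)\Big)^{-1}, \qquad \sum_{e\in\mathcal{E}_n} w(e)=\sum_{\ell=1}^{Z_n} w_n^{(\ell)},$$
where the $w_n^{(\ell)}$ are the $Z_n$ copies of $w_n$. We write $\log w_n^{(\ell)}=m_n+\sum_{i\le n}\xi_i^{(\ell)}$. Condition A$'$ makes the centered sum sub-Gaussian with variance proxy $V_n=o(\log n)$. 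The standard maximal bound for $Z_n$ sub-Gaussian variables (a union bound on exponential moments) then gives
$$\max_\ell \sum_{i\le n}\xi_i^{(\ell)}\ \le\ \sqrt{2V_n\log Z_n}+o(\log n)$$
eventually, by Borel--Cantelli along the sequence. This step needs $\mathbb{E}e^{\lambda\xi}$ bounds rather than only variances. Since $\log Z_n\sim\gamma\log n$ $\mathbb{P}$-a.s. (from \cite{konsowa2003fractal}) and $V_n=o(\log n)$, the fluctuation term is $o(\log n)$ when $\gamma<\infty$. Together with $m_n/\log n\to\alpha$ (Lemma \ref{limitlawsb3}), this gives
$$\sum_{e\in\mathcal{E}_n}w(e)\le Z_n e^{m_n+o(\log n)}=n^{\gamma+\alpha+o(1)}.$$
When $\gamma+\alpha<1$ the sum $\sum_n n^{-(\gamma+\alpha)-o(1)}$ diverges, so the walk is recurrent $\mathbb{P}\times\mathbb{Q}$-a.s. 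The Borel--Cantelli summability is where the hypothesis $\sum\tau_n^2/\log n<\infty$ has to be checked.

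\emph{Transience.} For $\gamma+\alpha>1$ we exhibit a unit flow of finite energy. On an $\textbf{RSST}$ the uniform flow sends $1/Z_n$ through each edge of $\mathcal{E}_n$. Its energy is
$$\sum_n Z_n^{-2}\sum_{\ell=1}^{Z_n}\frac{1}{w_n^{(\ell)}}\ \le\ \sum_n Z_n^{-1}\max_\ell \frac{1}{w_n^{(\ell)}}.$$
We apply the same sub-Gaussian maximal bound, now to the lower tail $\min_\ell\sum_i\xi_i^{(\ell)}\ge-o(\log n)$. This gives $\max_\ell 1/w_n^{(\ell)}\le n^{-\alpha+o(1)}$, so the energy is bounded by $\sum_n n^{-(\gamma+\alpha)+o(1)}<\infty$. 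By Lyons' criterion the walk is transient.

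\emph{Main obstacle and edge cases.} The key step is the uniform-in-$\ell$ control of the $Z_n$ independent fluctuations. A per-copy law of large numbers (Condition A) is not enough here, which is why Condition A$'$ is imposed. The case $\gamma=\infty$ with $\alpha>0$ needs separate handling. There only transience is at stake, since $\gamma+\alpha>1$ automatically. We would choose a subsequence of levels $n_k$ where $\log Z_{n_k}$ grows slowly enough for the maximal bound to remain $o(\log Z_n)$ relative to the growth. Alternatively, we would prune the tree to a spherically symmetric subtree with finite $\gamma'$ satisfying $\gamma'+\alpha>1$, and then use monotonicity of resistance under edge deletion (Rayleigh).
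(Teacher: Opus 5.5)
Your proof is essentially the paper's. The recurrence half matches the first part of the paper's argument: you short each level (Nash-Williams on the cutsets $\mathcal{E}_k$) and get $\max_{\ell\le Z_k}|\log w_{k,\ell}-m_k|=o(\log k)$ from the sub-Gaussian Chernoff bound, a union bound over $Z_k\le k^{\gamma+1}$ copies, $V_k=o(\log k)$ and Borel--Cantelli, which is exactly Lemma~\ref{leveluniform}. For transience, the paper uses the same uniform bound $w_{n,\ell}\ge n^{\alpha-\epsilon}$ with Rayleigh monotonicity on a tree pruned to a single path up to level $n_0$. Your uniform unit flow is the dual form of that computation, with the small bonus that the levels below $n_0$ just contribute a finite amount, so no pruning is needed.

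On $\gamma=\infty$, which the paper says its results include: you are right that your argument does not cover it. The paper's proof does not cover it either, since Lemma~\ref{leveluniform} assumes $g<\infty$, and with exponentially many independent copies the maximal fluctuation need not be $o(\log n)$. Your first repair (a subsequence of levels) cannot work, because an upper bound on resistance, or on a flow's energy, must account for every level. The pruning repair would still require you to prove that a spherically symmetric subtree with polynomial growth exponent in $(1-\alpha,\infty)$ exists, and that depends on how the levels with $d_n^+\ge 2$ are spread out. A simpler fix is to drop the maximum in the transient direction altogether. Conditionally on the tree, the $\mathbb{Q}$-expected energy of the uniform flow satisfies
\[
\sum_n \frac{\mathbb{E}_{\mathbb{Q}}(1/w_n)}{Z_n}\le \sum_n \frac{e^{-m_n+V_n/2}}{Z_n},
\]
using $\mathbb{E}_{\mathbb{Q}}e^{-(\log w_n-m_n)}\le e^{V_n/2}$ (Condition A$'$ with $\lambda=-1$). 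The right-hand side is finite whenever $\gamma+\alpha>1$, including $\gamma=\infty$, so the energy is finite $\mathbb{P}\times\mathbb{Q}$-a.s. The union bound is then needed only for recurrence, where $\gamma<1-\alpha<1$ automatically.
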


\begin{theorem}\label{theorem2}
The type of RWRE on $\textbf{BPVET}w$, under Condition A$'$, which starts at $r$ is recurrent almost surely if $\gamma_v+\alpha<1$, and transient almost surely if $\gamma_v+\alpha>1$.
\end{theorem}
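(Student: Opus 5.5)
The plan is to use the electrical-network criterion for recurrence: a reversible walk on a locally finite tree with conductances $w_e$ is recurrent if and only if the effective resistance from $r$ to infinity is infinite.

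\emph{Recurrence.} We bound the resistance from below by the Nash-Williams inequality over the canonical cutsets $\mathcal{E}_n$:
$$R_{\mathrm{eff}}\ \ge\ \sum_n \Big(\sum_{e\in\mathcal{E}_n} w_e\Big)^{-1}.$$

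\emph{Transience.} For $\textbf{BPVET}w$ we exhibit a unit flow of finite energy; on $\textbf{RSST}w$ the same construction is used. The flow splits uniformly: each edge of $\mathcal{E}_n$ carries $1/Z_n$ units, using that the tree is spherically symmetric or, for the branching tree, that $Z_n$ grows regularly. By Thomson's principle,
$$R_{\mathrm{eff}}\le \sum_n Z_n^{-2}\sum_{e\in\mathcal{E}_n} w_e^{-1}.$$

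Both bounds then reduce to estimating $\Sigma^{+}_n:=\sum_{e\in\mathcal{E}_n}w_e$ and $\Sigma^{-}_n:=\sum_{e\in\mathcal{E}_n}w_e^{-1}$, where the $Z_n$ summands are independent copies of $w_n=\exp(m_n+\sum_{i\le n}\xi_i)$. The aim is to show that, $\mathbb{Q}$-a.s. (jointly with $\mathbb{P}$-a.s.),
$$\Sigma^{\pm}_n = Z_n\, n^{\pm\alpha+o(1)},$$
that is, $\log\Sigma_n^{\pm}/\log n \to \gamma\pm\alpha$ (with $\gamma_v$ in place of $\gamma$ for the branching tree). We combine $m_n/\log n\to\alpha$, from Lemma \ref{limitlawsb3}(a), with $Z_n\asymp n^{\gamma}$ from \cite{konsowa2003fractal}. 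Granting this, the recurrence bound is $\sum_n n^{-(\gamma+\alpha)+o(1)}$, which diverges when $\gamma+\alpha<1$. The transience bound is $\sum_n n^{-\gamma-\alpha+o(1)}$, which converges when $\gamma+\alpha>1$. The $o(1)$ in the exponent is harmless because the inequalities are strict. When $\gamma=\infty$ the transient case is immediate, and no recurrent case occurs.

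The main obstacle is the uniform fluctuation control. Since $Z_n$ grows polynomially, we need the maximum over $Z_n$ copies of $S_n=\sum_{i\le n}\xi_i$ to be $o(\log n)$. Here Condition A$'$ does the work: each copy of $S_n$ is sub-Gaussian with variance proxy $V_n=o(\log n)$, by Kronecker's lemma as in the Remark. For the upper estimates on $\Sigma^{\pm}_n$ we use the moment bound
$$\mathbb{E}_{\mathbb{Q}}\,e^{\pm S_n}\le e^{V_n/2}=n^{o(1)},$$
which gives $\mathbb{E}_{\mathbb{Q}}\Sigma^\pm_n\le Z_n e^{\pm m_n}n^{o(1)}$. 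Markov's inequality applied at a threshold $n^{\pm\alpha+\epsilon}Z_n$, followed by Borel--Cantelli, would then give the a.s. upper bound, except that $n^{-\epsilon}$ is not summable. I would therefore pass to a geometric subsequence $n_k=2^k$ (or $\lfloor e^{k^{1/2}}\rfloor$) and interpolate using monotonicity of the relevant tail sums of resistances.

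The lower estimates on $\Sigma^\pm_n$ are easier: a single edge already gives $w_e\ge e^{m_n-\epsilon\log n}$ for all large $n$ a.s., by the law of large numbers type Lemma for one path under Condition A. The cases where the upper bound is actually needed are $\Sigma^+_n$ for recurrence and $\Sigma^-_n$ for transience, and both come from the Markov/Borel--Cantelli argument. The remaining subtlety is that the weights on different cutsets are correlated along each ray. Only first moments are used, however, so this correlation does not matter. Finally, the condition $\alpha>0$ in Theorem \ref{theorem1} is used only to keep $n^{\alpha}$ monotone in the interpolation step.
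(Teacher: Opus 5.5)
\textbf{The transience half fails on $\textbf{BPVET}w$.} Your recurrence half is the paper's argument (Nash-Williams over the canonical cutsets, i.e.\ shorting each level).

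The problem is the flow. Putting $1/Z_n$ units on every edge of $\mathcal{E}_n$ is not a flow on a tree that is not spherically symmetric. At $v\in S_n$ with $d^+_{n,v}$ children, Kirchhoff's node law requires $1/Z_n=d^+_{n,v}/Z_{n+1}$. This fails as soon as two vertices of generation $n$ have different out-degrees, so Thomson's principle gives you nothing.

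Regular growth of $Z_n$ cannot repair this. On a general tree with all out-degrees $\ge 1$, the growth exponent does not determine the type. Take a spine from whose $k$-th vertex $\asymp k^{a}$ rays branch off (through finite binary bushes if you want bounded degrees). It has $Z_n\asymp n^{a+1}$ for every $a$, yet it is recurrent: any unit flow either stays on the spine or sends positive flow down some ray, and both have infinite energy.

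What makes the threshold true for $\textbf{BPVET}w$ is the independence of offspring within a generation, and your argument never uses it. An easy explicit flow does not reach the sharp exponent either. Splitting equally among children gives expected level-$n$ energy $\prod_{k<n}\mathbb{E}_{\mathbb{P}}(1/d_k^+)\,n^{-\alpha+o(1)}$. With $\mathbb{P}(d_k^+=2)=\gamma_0/k=1-\mathbb{P}(d_k^+=1)$ for large $k$ (so $\gamma_v=\gamma_0$), a first-moment argument with this flow yields transience only when $\gamma_0/2+\alpha>1$.

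The paper handles this step in three moves:
\begin{enumerate}
\item It uses the uniform bound $w_{n,\ell}\ge n^{\alpha-\epsilon}$ for all $\ell\le Z_n$ and large $n$ (Lemma \ref{leveluniform}).
\item By Rayleigh monotonicity and the cut law, it passes to a shifted $\textbf{BPVET}$ with deterministic level conductances $(n_0+n)^{\alpha-\epsilon}$.
\item It applies Lyons' criterion \cite[Theorem 4.5]{lyons1992random}: transience holds once $\sum_n 1/\bigl(\mathbb{E}(Z^*_n)(n_0+n)^{\alpha-\epsilon}\bigr)<\infty$, with $\mathbb{E}(Z_n)\asymp n^{\gamma_v}$.
\end{enumerate}
You need that theorem, or an equivalent second-moment/capacity construction of a random finite-energy flow, in place of the uniform split.

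\textbf{The fluctuation control also has a gap.} For recurrence you need $\sum_n 1/\Sigma^+_n=\infty$. Bounding $\Sigma^+_n$ only at $n=2^k$ controls only the terms $n=2^k$, whose sum is comparable to $\sum_k 2^{-k(\gamma_v+\alpha)}$ and converges. Since $\Sigma^+_n$ is not monotone in $n$, there is nothing to interpolate with. Dyadic block sums would work: apply Markov to $\sum_{n\in B_k}\Sigma^+_n$ and use $\sum_{n\in B_k}1/\Sigma^+_n\ge |B_k|^2/\sum_{n\in B_k}\Sigma^+_n$.

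The detour is unnecessary, though. The loss comes from using only the moment at $\lambda=\pm1$. Optimizing $\lambda$ (Chernoff) gives $\mathbb{Q}\bigl(|\log w_{n,\ell}-m_n|>\epsilon\log n\bigr)\le 2\exp\bigl(-\epsilon^2(\log n)^2/(2V_n)\bigr)$. This is super-polynomially small because $V_n=o(\log n)$, so a union bound over $Z_n\le n^{\gamma_v+1}$ copies is summable at every $n$. That is the paper's Lemma \ref{leveluniform}. Its two-sided uniform form is exactly what the Rayleigh comparison in the transience step requires.
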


\begin{remark}
The parameters $\alpha+\gamma$ and $\alpha+\gamma_v$ can be infinite, positive, or negative.
\end{remark}

\begin{remark}
Condition A$'$ is only invoked when moving from a single edge to an expanding cutset on trees. On $\mathbb{Z}^+$ (where $Z_k\equiv1$), and more generally when the weights are uniform within each level ($w_{k,\ell}\equiv w_k$, as in Remark \ref{remSSRT}), Condition A by itself is enough. By contrast, a rare-but-large jump setting—such as the cascade described in Section \ref{S1}—meets Condition A but fails Condition A$'$. For these types of environments, the sharp threshold for $\textbf{RSST}w$ is asserted only assuming Condition A$'$, though on $\mathbb{Z}^+$ the result still holds under Condition A.
\end{remark}

\begin{remark}
\begin{enumerate}
\item In the simple deterministic case (canonical case) when $w_n=n^\alpha$ for $n \geq 1$, it follows from the theorem that the \emph{biased} RW on $\textbf{RSST}w$ (or $\textbf{BPVET}w$) that starts at $r$ is recurrent a.s. if $\gamma+\alpha<1$ ($\gamma_v+\alpha<1$) and is transient a.s. if $\gamma+\alpha>1$ ($\gamma_v+\alpha>1$). If $w_n=1$ (or $\alpha=0$), then the type problem boils down to the type problem of simple random walk on $\textbf{RSST}$ ($\textbf{BPVET}$); see also \cite{konsowa1991type,K-F}.  

\item It follows from Theorem \ref{theorem1} that RWRE on the line of non-negative integers $\mathbb{Z}^+$ (for which $\gamma=0$ and $Z_k\equiv1$, so that Condition A alone suffices) with those random weights starting at $0$ is recurrent a.s. if $\alpha=\lim_n n\mathbb{E}_{\mathbb{Q}}(\log r_n)<1$ and is transient a.s. if $\alpha>1$; the case $\alpha\le0$ is elementary, since then $R_n=\sum_k 1/w_k$ with $\lim_n\log w_n/\log n=\alpha\le0$ diverges, giving recurrence. A similar conclusion follows for the canonical case $w_n=n^\alpha$. 

\end{enumerate}

\end{remark}

We will also give the following result for the speed of RWRE on $\mathbb{Z}^+$. In \cite{kesten1975limit,tavare2004lectures,sznitman2004topics}, the speed was studied when $\{r_n: n \geq 1\}$ is a set of identically distributed random variables. Considering a 2-regular tree ($Z_n=2$ for all $n$) our theorem can retrieve Solomon's result for an RWRE on $\mathbb{Z}$ starting at the origin. See also \cite{peterson2013lecture,zeitouni2009random} for a review of RWRE and the references therein.

\noindent \textbf{Condition B.} The relative weights $r_n\geq c_n$ for all $n\geq 1$, almost surely, where $\{c_n\}$ is a constant sequence such that $\lim_n c_n=c>1$.

\begin{theorem}\label{theoremZ1}
Assume Condition B holds. Then the RWRE on $\mathbb{Z}^+$ started at $0$ has speed
$$v_L:=\dfrac{1-s_w}{1+s_w}\;\; \mathbb{S}\text{-a.s.}$$
provided that $s_w=\lim_n \mathbb{E}_{\mathbb{Q}}\left(\dfrac{1}{r_n}\right)<1$. If Condition B fails, then the speed is $v_L=0$, and consequently $s_w>1$.
\end{theorem}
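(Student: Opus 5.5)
We will follow the mean-exit-time method announced in the introduction. With $T_n$ the hitting time of level $n$, write $T_n=\sum_{k=1}^{n}\tau_k$, where $\tau_k=T_k-T_{k-1}$ is the time needed to go from $k-1$ to $k$. Since $|X_{T_n}|=n$ and $|X_m|\le \max\{k: T_k\le m\}$, a standard sandwich argument gives $v_L=\lim_n n/T_n$ whenever the limit $\lim_n T_n/n$ exists $\mathbb{S}$-a.s. It therefore suffices to show
$$\frac{T_n}{n}\to \frac{1+s_w}{1-s_w}\qquad \mathbb{S}\text{-a.s.}$$
Under Condition B the walk is transient to the right, because $w_n\ge C\,c^{\,n}\to\infty$ geometrically. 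Backtracking below level $k-1$ is then controlled, and the excursions below a fixed level are finite, so this initial segment does not affect the limit.

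The first step is the quenched mean. Using electrical networks, the walk on $\{0,\dots,k\}$ has conductances $w_i$ on the edges $(i-1,i)$. The standard commute/exit-time identity gives
$$E(\tau_k\mid w)=\frac{1}{w_k}\sum_{i=1}^{k}(w_i+w_{i-1})\cdot(\text{normalisation}),$$
which is equivalent to
$$E(\tau_k\mid w)=1+2\sum_{j=0}^{k-2}\prod_{i=j+2}^{k}\frac{1}{r_i}+\cdots .$$
In other words,
$$E(\tau_k\mid w)=1+2\sum_{m\ge1}\prod_{i=k-m+1}^{k}r_i^{-1},$$
truncated at $i\ge 2$. Condition B makes each product at most $\prod c_i^{-1}$, which decays geometrically. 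Taking $\mathbb{E}_{\mathbb{Q}}$ and using independence of the $r_i$,
$$\mathbb{E}_{\mathbb{Q}}E(\tau_k\mid w)=1+2\sum_{m}\prod_{i}\mathbb{E}_{\mathbb{Q}}(1/r_i).$$
Since $\mathbb{E}_{\mathbb{Q}}(1/r_i)\to s_w<1$ and the series is dominated by a geometric one, this converges to $1+2s_w/(1-s_w)=(1+s_w)/(1-s_w)$. Cesàro averaging, via the appendix limit lemmas (Lemma \ref{limitlawsb3}), then gives $\frac1n\sum_{k\le n}\mathbb{E}_{\mathbb{Q}}E(\tau_k\mid w)\to (1+s_w)/(1-s_w)$.

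The second step upgrades the mean to an almost-sure statement, and this is the main obstacle, since the $\tau_k$ are neither independent nor identically distributed under $\mathbb{S}$. We will split
$$T_n-\sum_k \mathbb{E}_{\mathbb{Q}}E(\tau_k\mid w)=\Big(T_n-E(T_n\mid w)\Big)+\Big(E(T_n\mid w)-\mathbb{E}_{\mathbb{Q}}E(T_n\mid w)\Big).$$
For the first term, note that under the quenched law the $\tau_k$ are independent by the strong Markov property. Their quenched variances are uniformly bounded, because by Condition B the backtracking excursions have geometric tails uniformly in $k$. Kolmogorov's SLLN for independent variables with $\sum_k V(\tau_k\mid w)/k^2<\infty$ then handles this term. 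For the second term, $E(\tau_k\mid w)$ depends on the $r_i$ only through geometrically weighted products, so it is an exponentially mixing functional of independent variables with bounded variance. Covariances decay geometrically in $|k-l|$, so $\mathrm{Var}(E(T_n\mid w))=O(n)$. A fourth-moment bound or a subsequence argument, combined with monotonicity of $T_n$, then yields a.s. convergence.

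Finally, we treat the failure of Condition B. If Condition B fails, one cannot have geometric growth with rate $>1$, and the expected exit times $E(\tau_k\mid w)$ are not uniformly summable. Using Fatou's lemma, $\liminf_n T_n/n\ge \lim_n\frac1n\sum_k E(\tau_k\mid w)$, and we will argue via Jensen's inequality that the latter is infinite, which gives $v_L=0$. Since $\mathbb{E}_{\mathbb{Q}}(1/r_n)\ge 1/\mathbb{E}_{\mathbb{Q}}(r_n)$, the ratio $1/r_n$ is infinitely often large, which forces $s_w\ge1$. We expect that establishing the strict inequality $s_w>1$ claimed in the statement will require an extra non-degeneracy argument, and this is the delicate point of the degenerate case.
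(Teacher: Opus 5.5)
When Condition B holds, your plan is the paper's proof. You use the network formula for the quenched crossing time and the annealed limit $(1+s_w)/(1-s_w)$ of its mean; you get that limit by dominated convergence where the paper uses Lemma \ref{limitlaw2x}(b), which is immaterial. Then come the quenched Kolmogorov SLLN using the deterministic variance bound that B supplies, the $O(n)$ variance of $E(T_n\mid w)$ from geometrically decaying covariances, a squares-subsequence argument with monotonicity, and the transfer from $T_n/n$ to $X_n/n$. That part is sound. For the transfer you also need the lower bound, since $|X_m|\le\max\{k:T_k\le m\}$ only gives the upper one. With $R_m$ the running maximum, use $R_m-X_m\le m-T_{R_m}<\tau_{R_m+1}=o(m)$, as in Lemma \ref{newlemma3}.

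The genuine gap is your final paragraph, and no argument can fill it because the clause is false as stated. You suspected trouble with the strict inequality, but in fact neither $v_L=0$ nor $s_w\ge 1$ follows from the failure of B. Failure of B only says $r_n$ is not a.s.\ bounded below by a deterministic sequence with limit $>1$; it controls neither the growth of $w_n$ nor $s_w$.
\begin{itemize}
\item Counterexample: take i.i.d.\ $r_n$ with $\mathbb{Q}(r_n=1/2)=0.1$ and $\mathbb{Q}(r_n=10)=0.9$. B fails, yet $w_n$ grows geometrically and $s_w=0.29<1$. By Solomon's theorem the speed is $(1-s_w)/(1+s_w)>0$.
\item The paper's own cascade example ($r_n=\ell$ with probability $1/n$, else $1$) violates B but has $s_w=1$, not $s_w>1$.
\end{itemize}
Your individual steps fail accordingly. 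Fatou's lemma bounds $E(\liminf_n T_n/n\mid w)$ from above, so it cannot give a pathwise lower bound on $T_n/n$. The claim that $\frac1n\sum_k E(\tau_k\mid w)\to\infty$ is refuted by the example above, where this average converges to $(1+s_w)/(1-s_w)$. And $\mathbb{E}_{\mathbb{Q}}(1/r_n)\ge 1/\mathbb{E}_{\mathbb{Q}}(r_n)$ says nothing about whether $s_w\ge1$.

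The paper does not prove this clause either. Its proof of Theorem \ref{theoremZ1} runs through Lemmas \ref{limitmean1}, \ref{newlemma1} and \ref{newlemma3}, all under B. Its discussion after the theorem concedes that failure of B need not imply $s_w>1$. The clause should be dropped. For i.i.d.\ environments the correct zero-speed criterion is $s_w\ge 1$ (Solomon). Beyond the i.i.d.\ case even that fails: let $r_n$ be independent, equal to $10$ except with probability $n^{-2}$, when $r_n=n^{-2}/2$. Then $s_w=2.1$, but by Borel--Cantelli eventually $r_n\equiv 10$, and the speed is $9/11$.
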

The quantities $\alpha$ and $s_w$ capture distinct aspects of the walk. The sign of $\mathbb{E}_{\mathbb{Q}}(\log\rho)$, where $\rho:=1/r$, determines the walk’s type. Meanwhile, $s_w$ controls whether the walk is ballistic within the transient phase. When $\{r_n\}$ are i.i.d., the model coincides with Solomon’s walk with ratios $\rho_k=1/r_k$. Define $g(\theta):=\mathbb{E}_{\mathbb{Q}}(\rho^\theta)$ which is a convex function, with $g(0)=1$, $g'(0)=\mathbb{E}_{\mathbb{Q}}(\log\rho)$, and $g(1)=s_w$. By Solomon \cite{S}, on $\mathbb{Z}^+$ the walk is recurrent exactly when $\mathbb{E}_{\mathbb{Q}}(\log\rho)\ge0$, and it is transient to $+\infty$ exactly when $\mathbb{E}_{\mathbb{Q}}(\log\rho)<0$; thus the type depends only on the sign of $g'(0)$. In the transient regime, let $\kappa>0$ be the solution of $g(\kappa)=1$; see Kesten–Kozlov–Spitzer \cite{kesten1975limit}. Convexity implies $\operatorname{sign}(\kappa-1)=\operatorname{sign}(1-s_w)$, so 
\begin{enumerate}
    \item $s_w<1$ if and only if $\kappa>1$ indicating the ballistic case with $v_L=(1-s_w)/(1+s_w)>0$,
    \item $s_w=1$ if and only if $\kappa=1$ indicating the critical zero-speed case, and
    \item $s_w>1$ if and only if $0<\kappa<1$ indicating the transient but sub-ballistic case, $X_n\asymp n^\kappa$ and $v_L=0$.
\end{enumerate}
Note that $s_w<1$ implies transience with $v_L>0$ by Jensen’s inequality, since $$\mathbb{E}_{\mathbb{Q}}(\log\rho)\le \log s_w<0.$$
The converse, however, does not hold. In particular, having $s_w>1$ does not force recurrence. As an illustration, let $r\in\{3,\frac13\}$ with $\mathbb{Q}(r=3)=0.6$. Then $\mathbb{E}_{\mathbb{Q}}(\log\rho)=-0.2\log 3<0$, so the walk is transient, yet $s_w=1.4>1$. Furthermore, solving $g(\kappa)=0.6\cdot 3^{-\kappa}+0.4\cdot 3^{\kappa}=1$ yields $\kappa=\log_3(3/2)\approx 0.369$, so the walk drifts to $+\infty$ with $X_n\asymp n^{0.369}$ but still has zero limiting velocity. See Conjecture \ref{conj_1}. Thus, Condition B is best interpreted as a sufficient regularity condition rather than the fundamental dichotomy. It imposes $s_w\leq 1/c<1$, which then leads to the ballistic statement of Theorem \ref{theoremZ1}, and its failure need not imply $s_w>1$. The actual dichotomy is $s_w<1$ with speed $(1-s_w)/(1+s_w)$ versus $s_w\ge 1$, which is of zero speed, with the borderline case $s_w=1$ appearing in the cascade example below.

The value of $s_w$ might equal one, resulting in a zero-speed RWRE even if it is transient. For example, if the weights are given by a multiplicative cascade $w_n=\prod_{i=1}^n r_i$ defined by the independent random variables $r_n$ such that $r_n$ is equal to $\ell>0$ with probability $\frac{1}{n}$ and equal to $1$ otherwise. Then Condition A is satisfied and $\alpha=\lim_n n \mathbb{E}_{\mathbb{Q}}(\log r_n)=\log \ell$. Therefore, RWRE is transient when $\ell>\exp(1)$ and is recurrent otherwise for that environment. Meanwhile, it has $s_w=1$. Based on that point and the polynomial growth of some branching processes frequently used to study hitting time, we make the following conjecture.

\begin{conjecture}\label{conj_1}
When $s_w=1$ under conditions A and B, then $\alpha$ determines a phase transition in the logarithmic scale speed, $\lim_n \dfrac{\log |X_n|}{\log n}$, of an RWRE in $\mathbb{Z}^+$; this occurs primarily at the value of $\alpha=1$.
\end{conjecture}

The following corollary generalizes the results in \cite{K}.
\begin{corollary}\label{cor1}
Suppose the out-degrees satisfy $d_n^+\ge c_n$ a.s.\ for all $n\geq 1$, where $\{c_n\}$ is a constant sequence with $\lim_n c_n=c>1$. Then the speed of a simple symmetric random walk on $\textbf{RSST}$, which starts at $r$, is 
$$v_T:=\dfrac{1-s_T}{1+s_T} \;\; \mathbb{S}-\text{a.s.}$$
if $s_T=\lim_n \mathbb{E}_{\mathbb{P}}(\dfrac{1}{d_n^+})<1$. The speed $v_T=0$ when $s_T\ge1$.
\end{corollary}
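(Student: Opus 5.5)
The plan is to reduce the walk on the $\textbf{RSST}$ to a birth-and-death chain on $\mathbb{Z}^+$ and then apply Theorem \ref{theoremZ1}. For a simple random walk on a spherically symmetric tree, the level process $|X_k|$ is itself a Markov chain on $\mathbb{Z}^+$. This holds because every vertex of $S_n$ has exactly one parent edge and $d_n^+$ child edges. From level $n\ge1$ the chain therefore moves up with probability $d_n^+/(d_n^++1)$ and down with probability $1/(d_n^++1)$, and from the root it always moves up. This is exactly the RWRE on $\mathbb{Z}^+$ with conductances $w_n:=Z_n=\prod_{k=0}^{n-1}d_k^+$ on the edge $\{n-1,n\}$, since the up/down probabilities at $n$ are $w_{n+1}/(w_n+w_{n+1})$ and $w_n/(w_n+w_{n+1})$. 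The relative weights are then $r_n=w_{n+1}/w_n=d_n^+$, up to a harmless index shift that only affects the root step. These are independent under $\mathbb{P}$ because the $d_n^+$ are independent across generations in the $\textbf{RSST}$ model. Since $|X_k|$ is the distance from the root, the speed of the tree walk equals the speed of this line walk. In this reinterpretation $\mathbb{P}$ plays the role of $\mathbb{Q}$.

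Next I will check the hypotheses of Theorem \ref{theoremZ1}. Condition B is exactly the assumption $d_n^+\ge c_n$ with $c_n\to c>1$. The quantity $s_w=\lim_n\mathbb{E}(1/r_n)$ becomes $s_T=\lim_n\mathbb{E}_{\mathbb{P}}(1/d_n^+)$. Theorem \ref{theoremZ1} itself requires only Condition B and $s_w<1$. Condition A is needed only for the type statement, and it holds anyway because $1\le d_n^+\le D$ gives bounded $\log r_n$. It is a little awkward that $\mathbb{V}_{\mathbb{P}}(\log d_n^+)$ need not be summable against $(\log n)^{-2}$ unless the variances are small. I will not use it, since ballisticity follows from Condition B regardless. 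Theorem \ref{theoremZ1} then gives $v_T=(1-s_T)/(1+s_T)$ $\mathbb{S}$-a.s. when $s_T<1$.

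For the case $s_T\ge1$, note that $d_n^+\ge1$ forces $\mathbb{E}(1/d_n^+)\le1$, so $s_T\ge1$ means $s_T=1$. This says $\mathbb{P}(d_n^+\ge2)\to0$, so Condition B fails. By the last clause of Theorem \ref{theoremZ1}, the speed is then $0$. To be safe I will also give a direct argument. The drift at level $n$ is $\mathbb{E}[(d_n^+-1)/(d_n^++1)]\le\mathbb{P}(d_n^+\ge2)\to0$ under the annealed law. Hence $|X_k|\le \#\{\text{up-steps}\}-\#\{\text{down-steps}\}$, and a martingale decomposition of $|X_k|$ shows that $|X_k|/k$ is bounded by Cesàro averages of the local drifts seen by the walk. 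Each level is visited at least once, so these averages tend to $0$.

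I expect the main obstacle to be the zero-speed case. The annealed measure is not Markov, and the walk may spend time at rare levels with large $d_n^+$, so turning "drift tends to $0$" into $|X_k|/k\to0$ needs care. My first attempt will be the mean exit time route used for Theorem \ref{theoremZ1}: show $\mathbb{E}(T_{n+1}-T_n)\to(1+s_T)/(1-s_T)=\infty$, which gives $T_n/n\to\infty$ and hence $v_T=0$.
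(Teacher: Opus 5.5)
Your proposal is correct and is essentially the paper's argument: the level process $|X_k|$ is the RWRE on $\mathbb{Z}^+$ with $r_n=d_n^+$ (your conductances $Z_{n+1}$ on the edge $\{n,n+1\}$ make this explicit, with independence coming from independence across generations), the hypothesis $d_n^+\ge c_n$ is exactly Condition B, and Theorem \ref{theoremZ1} then gives $v_T=(1-s_T)/(1+s_T)$. The zero-speed clause is not an obstacle, and the extra work on it can be dropped: under the standing hypothesis $\mathbb{E}_{\mathbb{P}}(1/d_n^+)\le 1/c_n\to 1/c$, so $s_T\le 1/c<1$ and the case $s_T\ge 1$ is vacuous (your own observation that $s_T=1$ forces Condition B to fail already shows this), which means you need neither the final clause of Theorem \ref{theoremZ1} (that clause is not valid in general, since Condition B can fail while $s_T<1$ and the speed is positive) nor the unfinished martingale and mean-exit-time sketches.
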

\begin{proof}
The distance from the origin in a simple symmetric random walk on $\textbf{RSST}$ can be viewed as an RWRE on $\mathbb{Z}^+$ with transition parameters $r_n=d_n^+$ (see \cite{K}). Under this identification, the assumption coincides precisely with Condition B for the resulting effective environment, and the statement follows by applying Theorem \ref{theoremZ1} with $r_n=d_n^+$.
\end{proof}

For $\textbf{RSST}w$, both the type and the speed collapse to those of the associated line, since the tree is spherically symmetric. From any level-$k$ vertex ($k\ge1$), the walk either moves down to one of its $d_k^+$ children along an edge in $\mathcal{E}_{k+1}$ with conductance $w_{k+1}$, or moves up to its parent along the unique edge in $\mathcal{E}_k$ with conductance $w_k$. Moreover, all vertices at level $k$ are indistinguishable. Therefore the distance-to-root process $\{|X_n|\}_{n\geq 1}$ forms a birth-death chain on $\mathbb{Z}^+$ with $q_k/p_k=w_k/(d_k^+ w_{k+1})=1/(d_k^+ r_{k+1})$, i.e., it is the RWRE from Theorem \ref{theoremZ1} with effective inverse ratios $s_k:=\frac{1}{d_k^+\,r_{k+1}}$ for $k\geq 1$. Since $\{d_k^+\}$ under $\mathbb{P}$ and $\{r_k\}$ under $\mathbb{Q}$ are independent across $k$ and also independent of each other, the variables $\{s_k\}$ are independent, and $\bar s_k:=\mathbb{E}_{\mathbb{P}\times\mathbb{Q}}(s_k)=\mathbb{E}_{\mathbb{P}}(1/d_k^+)\,\mathbb{E}_{\mathbb{Q}}(1/r_{k+1})$. Hence
$$s_{Tw}:=\lim_k\bar s_k=\Bigl(\lim_k\mathbb{E}_{\mathbb{P}}(\frac1{d_k^+})\Bigr)
\Bigl(\lim_k\mathbb{E}_{\mathbb{Q}}(\frac1{r_k})\Bigr)=s_T\,s_w.$$

\noindent\textbf{Condition B$'$ (joint).} The effective ratios obey $d_k^+\,r_{k+1}\ge\tilde c_k$ for every $k\ge1$, almost surely, where $\{\tilde c_k\}$ is a constant sequence satisfying $\lim_k\tilde c_k=\tilde c>1$.

Condition B$'$ is less restrictive than Condition B. Since $d_k^+\ge1$, every environment that satisfies Condition B also satisfies B$'$ with $\tilde c=c$. However, B$'$ additionally admits environments where $s_w$ is close to or even exceeds $1$, as long as sufficient branching makes up for it.

\begin{theorem}\label{theoremZ2}
Given Condition B$'$, an RWRE on $\textbf{RSST}w$ that starts at $r$ has speed
$$v_{Tw}:=\frac{1-s_{Tw}}{1+s_{Tw}}=\frac{1-s_T\,s_w}{1+s_T\,s_w}\qquad\mathbb{S}\text{-a.s.},$$
where $s_T=\lim_n\mathbb{E}_{\mathbb{P}}(1/d_n^+)$ and $s_w=\lim_n\mathbb{E}_{\mathbb{Q}}(1/r_n)$.
The speed $v_{Tw}=0$ when $s_{Tw}\geq 1$.
\end{theorem}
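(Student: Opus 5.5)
The plan is to reduce Theorem \ref{theoremZ2} to Theorem \ref{theoremZ1} through the projection described just before Condition B$'$. By spherical symmetry, the distance-to-root process $\{|X_n|\}$ is a birth--death chain on $\mathbb{Z}^+$. Its inverse ratios at level $k$ are
$$s_k=\frac{w_k}{d_k^+\,w_{k+1}}=\frac{1}{d_k^+ r_{k+1}}.$$
We will therefore treat it as an RWRE on $\mathbb{Z}^+$ with effective relative weights $\tilde r_k:=d_{k-1}^+ r_k$, up to the harmless index shift and the special behaviour at the root. Here the probability space is $(\Omega_d\times\Omega_w,\mathbb{P}\times\mathbb{Q})$ in place of $(\Omega_w,\mathbb{Q})$. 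The speed $\lim_n |X_n|/n$ of the tree walk is by definition the speed of this projected chain, so it is enough to verify the hypotheses of Theorem \ref{theoremZ1} for $\{\tilde r_k\}$.

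The verification has three checks.
\begin{enumerate}
\item The $\tilde r_k$ are independent across $k$. This holds because $\{d_k^+\}$ and $\{r_k\}$ are each independent sequences and are mutually independent, and $\tilde r_k$ depends only on the pair $(d_{k-1}^+,r_k)$.
\item Condition B for $\tilde r_k$ is exactly Condition B$'$, with $c_k=\tilde c_{k-1}$ and $c=\tilde c>1$.
\item The effective $s_w$ is $\lim_k \mathbb{E}_{\mathbb{P}\times\mathbb{Q}}(1/\tilde r_k)=\lim_k \mathbb{E}_{\mathbb{P}}(1/d_{k-1}^+)\,\mathbb{E}_{\mathbb{Q}}(1/r_k)=s_Ts_w=s_{Tw}$. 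This uses independence and the existence of both limits, which is implicit in the definitions of $s_T$ and $s_w$.
\end{enumerate}
Once these hold, Theorem \ref{theoremZ1} gives $v=(1-s_{Tw})/(1+s_{Tw})$ $\mathbb{S}$-a.s. whenever $s_{Tw}<1$. Condition B$'$ already forces $s_{Tw}\le 1/\tilde c<1$.

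The main obstacle is the statement $v_{Tw}=0$ when $s_{Tw}\ge1$, together with checking that the proof of Theorem \ref{theoremZ1} really uses only independence of the ratios and not, say, Condition A on a single copy. Condition B$'$ forces $s_{Tw}<1$, so under its hypothesis the zero-speed clause can only be meant outside Condition B$'$. In that case I would appeal to the second clause of Theorem \ref{theoremZ1} applied to the projected chain. Alternatively, I would rerun its mean-exit-time argument directly: $E_r(T_n|w)$ equals the sum over $k$ of the resistance--conductance products $\sum_{j\le k}\prod_{i=j}^{k} s_i$. A strong law for these products, via Kronecker's lemma as in the appendix limit laws, shows that $E(T_n)/n$ tends to $(1+s_{Tw})/(1-s_{Tw})$ when $s_{Tw}<1$ and diverges when $s_{Tw}\ge1$. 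By the standard relation $v=\lim n/T_n$, the divergence gives $v=0$.

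The root needs only a remark. At the root the walk moves to one of $d_0^+$ children with probability one, so it is a reflecting boundary, exactly as in the $\mathbb{Z}^+$ setting of Theorem \ref{theoremZ1}. A finite modification at one site does not affect the limit of $|X_n|/n$.
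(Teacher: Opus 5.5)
Your proposal is correct and follows the paper's route. The paper likewise projects $|X_n|$ onto the birth--death chain with effective ratios $d_k^+r_{k+1}$, uses their independence, identifies Condition B$'$ with Condition B for them, and identifies the effective mean inverse ratio as $s_Ts_w$. It then re-runs the steps of Lemma~\ref{newlemma1} (variance bound, covariance decay of $\eta_k$, squares subsequence) and Lemma~\ref{newlemma3} for this chain, with an explicit transience check, rather than quoting Theorem~\ref{theoremZ1} as a black box as you do.

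Your observation that Condition B$'$ forces $s_{Tw}\le 1/\tilde c<1$ makes the zero-speed clause vacuous as stated, so your proof of the literal statement is complete. Your off-hypothesis sketch is not a proof, though: divergence of $E(T_n)/n$ alone does not give $T_n/n\to\infty$ almost surely. Also, the ``Condition B fails'' clause of Theorem~\ref{theoremZ1} that you appeal to is contradicted by the paper's own later discussion. The paper's one-line treatment of that case is no more rigorous than yours.
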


\begin{remark}
Theorem \ref{theoremZ2} subsumes two previous statements. If we take $r_n\equiv1$, then $s_w=1$ and $s_{Tw}=s_T$, yielding the simple-symmetric speed $v_T=(1-s_T)/(1+s_T)$ from Corollary \ref{cor1}. If instead we set $d_n^+\equiv1$ (i.e., the line), then $s_T=1$ and $s_{Tw}=s_w$, which reproduces Theorem \ref{theoremZ1}. The factorization $s_{Tw}=s_T\,s_w$ indicates that, for a spherically symmetric tree, the contributions of the tree’s geometry and the edge environment influence the speed independently, via their respective harmonic-type means. This, moreover, refines Remark \ref{remSSRT}, in which the same product $s_w s_T$ appeared for level-constant weights.
\end{remark}

\begin{remark}
It may be tempting to describe the speed on $\textbf{BPVET}w$ using the annealed one-step drift
$$v_{BT_w}:=\lim_n\mathbb{E}_{\mathbb{P}\times\mathbb{Q}}
\left(\frac{1-\frac1{r_n d_n^+}}{1+\frac1{r_n d_n^+}}\right),$$
which is analogous to $v_{Tw}$, with the difference that the expectation is applied \emph{after} computing the local drift rather than beforehand. Nevertheless, two points should be separated. First, proving the appropriate inequality that connects these two quantities, and second, the still-unconfirmed assertion that $v_{BT_w}$ coincides with the true a.s. speed.

\emph{The inequality is correct.} The map $f(x)=(1-x)/(1+x)=-1+2/(1+x)$ is convex on $(0,\infty)$, so by Jensen's inequality, with $s_n=1/(r_n d_n^+)$,
$$\mathbb{E}\left(\frac{1-s_n}{1+s_n}\right)\geq\frac{1-\mathbb{E}(s_n)}{1+\mathbb{E}(s_n)},$$ whence $v_{BT_w}\ge v_{Tw}$. Hence, the annealed drift of $\textbf{BPVET}w$ is at least as large as the $\textbf{RSST}w$ speed from Theorem \ref{theoremZ2}, which aligns with the intuition that offspring variability can only help the walk escape.

\emph{Why this does not prove the speed.} In contrast to $\textbf{RSST}w$, the tree $\textbf{BPVET}w$ lacks spherical symmetry. Vertices in the same generation have descendant subtrees of varying sizes, so the distance sequence $\{|X_n|\}$ is \emph{not} Markov. Equivalently, the $\mathbb{S}$-process is not Markov. Thus the birth-death reduction used in Theorem \ref{theoremZ2} cannot be applied. Rayleigh monotonicity, together with the cut/short laws, relates resistances between these trees and hence transports the type (Theorem \ref{theorem2}). But it does not yield any control on the speed. For Galton-Watson trees, the speed of a biased or simple random walk is determined by the environment viewed from the particle and is typically a non-explicit function of the offspring distribution \cite{lyons1990random,aidekon2014speed,zeitouni2009random,peterson2013lecture}. The neat average $v_{BT_w}$ would arise only if the walk mixed within each generation, which is generally obstructed by the size-biasing inherent in the environment-from-the-particle. Accordingly, we expect $v_{BT_w}$ to be a strict heuristic upper bound rather than the actual speed, and we note that identifying the true speed remains an open problem.

\begin{conjecture}\label{conj_2}
Under Condition B$'$, the RWRE on $\textbf{BPVET}w$ has an a.s. speed $v\in[v_{Tw},v_{BT_w}]$, with $v=v_{BT_w}$ in the degenerate case of vanishing offspring fluctuation, i.e., $\mathbb{V}_{\mathbb{P}}(d_n^+)\to0$ as $n\to \infty$, and $v<v_{BT_w}$ in general. A proof would proceed via regeneration times and the stationary environment seen from the particle, as in the Galton-Watson theory \cite{aidekon2014speed}.
\end{conjecture}
\end{remark}

\section{Proofs\label{S2}}
A number of auxiliary lemmas and propositions necessary for the proof of the results are given in the appendix. In the following, we give a proof of Theorem \ref{theorem1}.

\begin{lemma}\label{leveluniform}
Let $\mathbf{T}$ be $\textbf{RSST}w$ or $\textbf{BPVET}w$ with independent edge-weight copies $\{w_{k,\ell}:\ell=1,\dots,Z_k\}$ within each cutset. Suppose $\log Z_k/\log k\to g$ $\mathbb{P}$-a.s.\ with $g<\infty$ with $g=\gamma$ or $g=\gamma_v$, respectively. Let $m_k=\sum_{i=1}^k \mathbb{E}_{\mathbb{Q}}(\log r_i)$. Under Condition A$'$,
$$\lim_k \frac{\max_{1\le\ell\le Z_k}\bigl|\log w_{k,\ell}-m_k\bigr|}{\log k}=0 \qquad \mathbb{P}\times\mathbb{Q}\text{-a.s.},$$
and consequently $\sum_{\ell=1}^{Z_k}w_{k,\ell}\asymp k^{g+\alpha}$, $\mathbb{P}\times\mathbb{Q}$-a.s.
\end{lemma}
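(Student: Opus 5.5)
Fix $\epsilon>0$. I will show that, $\mathbb{P}\times\mathbb{Q}$-a.s., only finitely many $k$ have an index $\ell\le Z_k$ with $|\log w_{k,\ell}-m_k|>\epsilon\log k$; the conclusion then follows by letting $\epsilon\downarrow0$ along a countable sequence. The approach is a sub-Gaussian tail bound for each copy, a union bound over the $Z_k$ copies in the cutset, and Borel--Cantelli.

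\textbf{Tail bound for one copy.} Each copy satisfies $\log w_{k,\ell}-m_k=\sum_{i=1}^k\xi_i^{(\ell)}$, where the $\xi_i^{(\ell)}$ are independent copies of the centered increments. By Condition A$'$ and independence, the moment generating function of this sum is at most $e^{\lambda^2V_k/2}$, with $V_k=\sum_{i\le k}\tau_i^2$. The Chernoff bound then gives
$$\mathbb{Q}\bigl(|\log w_{k,\ell}-m_k|>\epsilon\log k\bigr)\le 2\exp\Bigl(-\frac{\epsilon^2(\log k)^2}{2V_k}\Bigr).$$

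\textbf{Union bound over the cutset.} Condition on the tree, which is independent of $\mathbb{Q}$. Since $\log Z_k/\log k\to g<\infty$ a.s., we have $Z_k\le k^{g+1}$ for all large $k$. The union bound gives
$$\mathbb{Q}\Bigl(\max_{\ell\le Z_k}|\log w_{k,\ell}-m_k|>\epsilon\log k\Bigr)\le 2k^{g+1}\exp\Bigl(-\frac{\epsilon^2(\log k)^2}{2V_k}\Bigr).$$
By the Remark following Condition A$'$, Kronecker's lemma gives $V_k=o(\log k)$. So for large $k$ we have $V_k\le \delta_k\log k$ with $\delta_k\to0$, and the exponent is at least $\epsilon^2\log k/(2\delta_k)$. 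Once $\epsilon^2/(2\delta_k)>g+3$, the right-hand side is at most $2k^{-2}$, which is summable. Borel--Cantelli under $\mathbb{Q}$ for $\mathbb{P}$-a.e. tree, followed by Fubini, gives the claim $\mathbb{P}\times\mathbb{Q}$-a.s.

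\textbf{Main obstacle.} The delicate point is exactly this balance between the polynomial number $Z_k$ of copies and the Gaussian tail. It works only because $V_k=o(\log k)$; with $V_k\asymp\log k$ the bound would fail, which is the cascade example in the Remark. I would also be careful about the within-cutset structure. If copies in one cutset share ancestry, the marginal bound per copy is still all that the union bound needs, so dependence between copies is harmless.

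\textbf{Consequence for the cutset sum.} By Lemma \ref{limitlawsb3}(a), $m_k/\log k\to\alpha$. Hence for large $k$ every $w_{k,\ell}$ lies in $[k^{\alpha-2\epsilon},k^{\alpha+2\epsilon}]$. It follows that
$$Z_k k^{\alpha-2\epsilon}\le\sum_{\ell=1}^{Z_k} w_{k,\ell}\le Z_k k^{\alpha+2\epsilon},$$
and with $Z_k=k^{g+o(1)}$ this yields $\log\sum_\ell w_{k,\ell}/\log k\to g+\alpha$. When $g+\alpha=0$ the relation $\asymp$ should be read in this exponent sense.
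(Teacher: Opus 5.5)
Your proposal is correct and follows the paper's proof essentially step for step: a per-copy sub-Gaussian Chernoff bound with variance proxy $V_k$, a union bound conditional on the tree over the at most $k^{g+1}$ edges of the cutset, $V_k=o(\log k)$ from Kronecker's lemma to make the bound $2k^{-2}$ summable, Borel--Cantelli with a countable intersection over $\epsilon$, and finally a sandwich of the cutset sum between $Z_k$ times the smallest and largest weights. The only cosmetic difference is that your bracketing $[k^{\alpha-2\epsilon},k^{\alpha+2\epsilon}]$ tacitly assumes $\alpha$ is finite; writing the bounds as $Z_k e^{m_k\pm\delta_k}$, as the paper does, also covers $\alpha=\pm\infty$.
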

\begin{proof}
Take $\epsilon>0$ and define $\Xi_{k,\ell}:=\log w_{k,\ell}-m_k=\sum_{i=1}^k\xi_{i,\ell}$, which is the sum of $k$ independent sub-Gaussian increments with variance proxy $V_k=\sum_{i=1}^k\tau_i^2$. By the Chernoff bound, for every $\ell$ we have $\mathbb{Q}(|\Xi_{k,\ell}|>\epsilon\log k)\le 2\exp\!\big(-\epsilon^2(\log k)^2/(2V_k)\big)$.

Because $\log Z_k/\log k\to g$, there exists a $\mathbb{P}$-a.s.\ event such that for all sufficiently large $k$ one has $Z_k\le k^{g+1}$. Conditioning on $\mathcal{F}_d$ and applying a union bound across the $Z_k$ replicas yields
$$\mathbb{Q}\Bigl(\max_{\ell\le Z_k}|\Xi_{k,\ell}|>\epsilon\log k\,\Big|\,\mathcal{F}_d\Bigr)
\leq 2\,k^{g+1}\exp\Bigl(-\frac{\epsilon^2(\log k)^2}{2V_k}\Bigr).$$
Under Condition A$'$, we have $V_k=o(\log k)$, so for all large $k$ the exponent is larger than $(g+3)\log k$. Consequently, the right-hand side is bounded by $2k^{-2}$, which is summable. It then follows from the Borel-Cantelli lemma that $\max_{\ell\le Z_k}|\Xi_{k,\ell}|\le\epsilon\log k$ eventually, $\mathbb{P}\times\mathbb{Q}$-a.s. Taking the intersection over $\epsilon=1/m$ gives the first assertion. 

Define $\delta_k:=\max_{\ell\leq Z_k}|\Xi_{k,\ell}|=o(\log k)$. Then $Z_k e^{m_k-\delta_k}\le\sum_{\ell\le Z_k}w_{k,\ell}\leq Z_k e^{m_k+\delta_k}$, and therefore $\log(\sum_{\ell\leq Z_k} w_{k,\ell})/\log k\to g+\alpha$, since $\log Z_k/\log k\to g$, $m_k/\log k\to\alpha$, and $\delta_k/\log k\to 0$.
\end{proof}

\begin{remark}
    In case of, $w_{n,\ell}\equiv w_n$ for all $\ell$. Under Condition A, the generalized Kolmogorov law of large numbers, see \cite{robbins1985convergence} or \cite[Corollary 7.4.1]{resnick2013probability}, implies that $$\dfrac{\log(w_n)-E(\log(w_n))}{\log n}=\dfrac{\sum_{k=1}^{n}(\log(r_k)-E(\log(r_k))}{\log n}$$ goes to zero as $n \longrightarrow
\infty$, almost surely. Part (1) of Lemma \ref{limitlawsb3} implies that $m_n/\log n\to\alpha$ and so does $\dfrac{\log(w_n)}{\log n}\to\alpha$. 
\end{remark}

\begin{proof}[Proof of Theorem \ref{theorem1}] 
Fix $n\geq 1$. Consider an electrical network in which every edge $e\in\mathcal{E}_k$ (for $k\leq n$) is assigned a conductance $w_{k,\ell}$, where $\ell=1,\ldots,Z_k$. Inject one unit of current at the root $r$ and remove it at $S_n$. Let $R_n$ denote the resulting effective resistance between $r$ and $S_n$.

Assume that $\gamma+\alpha<1$. If, in the electrical network, we contract (short) the vertices within each level set $S_k$ for all $k\le n$, then the effective resistance in the modified network between $r$ and $S_n$ is
$$
\sum_{k=1}^n \frac{1}{\sum_{\ell=1}^{Z_k} w_{k,\ell}},
$$
and this quantity provides a lower bound for $R_n$ \cite{D-S}. Since $\sum_{\ell=1}^{Z_k} w_{k,\ell}\asymp k^{\gamma+\alpha}$, $\mathbb{P}\times\mathbb{Q}$-a.s. due to Lemma \ref{leveluniform}, then recurrence follows from Lemmas \ref{limitlaws3} and \ref{limitlaws4}. 


Notice that, assuming Condition A$'$, Lemma \ref{leveluniform} with $g=\gamma$ yields a $\mathbb{P}\times\mathbb{Q}$-a.s.\ event $B$ such that for every $\omega\in B$ and each $\epsilon>0$ there exists $n_0(\omega)$ for which
\begin{equation}\label{eq1n0}
n^{\alpha-\epsilon} \leq w_{n,\ell}(\omega)\leq n^{\alpha+\epsilon}
\qquad\text{uniformly for } 1\le\ell\le Z_n,
\end{equation}
holds for all $n>n_0(\omega)$. In other words, $\sum_{\ell=1}^{Z_k} w_{k,\ell}\asymp k^{\gamma+\alpha}$ $\mathbb{P}\times\mathbb{Q}$-a.s.

Assume now that $\gamma+\alpha>1$. Let $T^*$ be the tree obtained by removing all edges of the $\textbf{RSST}w$ from the root $r$ down to level $S_{n_0}$, except for a collection of edges that forms a single path from $r$ to some chosen vertex at that level (see Figure \ref{fig:tree-trimmed}). For simplicity, we take this path to consist of the edges with weights $w_{k,1}$ for $k=1,\ldots,n_0$. Beyond this path, and for each $n>n_0$, assign to every remaining edge in $\mathcal{E}_n$ the weight (conductance) $n^{\alpha-\epsilon}$. By Rayleigh’s Monotonicity Law \cite{D-S}, increasing edge resistances can only increase the total effective resistance; see Equation \eqref{eq1n0}. Hence, using also the Cut law \cite{D-S}, the total effective resistance of $T^*$ is $\mathbb{P}\times\mathbb{Q}$-a.s. larger than that of $\textbf{RSST}w$. Moreover, for every $n>n_0$, the effective resistance $R_n^*$ in $T^*$ between $r$ and level $S_n$ satisfies
\begin{equation}\label{eq2}
R_n^*=\sum_{k=1}^{n_0} \dfrac{1}{w_{k,1}}+\sum_{k=n_0+1}^{n}\dfrac{Z_{n_0}}{Z_k\,k^{\alpha-\epsilon}}.
\end{equation}
By Lemmas \ref{limitlaws3} and \ref{limitlaws4}, the left-hand side converges as $n\to\infty$ whenever $\gamma+\alpha>1$. Finally, one may cut the path from the root to the vertex in $S_{n_0}$ and instead inject the current at the new root (the vertex in $S_{n_0}$), and the conclusion still holds.
\end{proof}

\begin{figure}[H]
    \centering
    \includegraphics[width=0.6\textwidth]{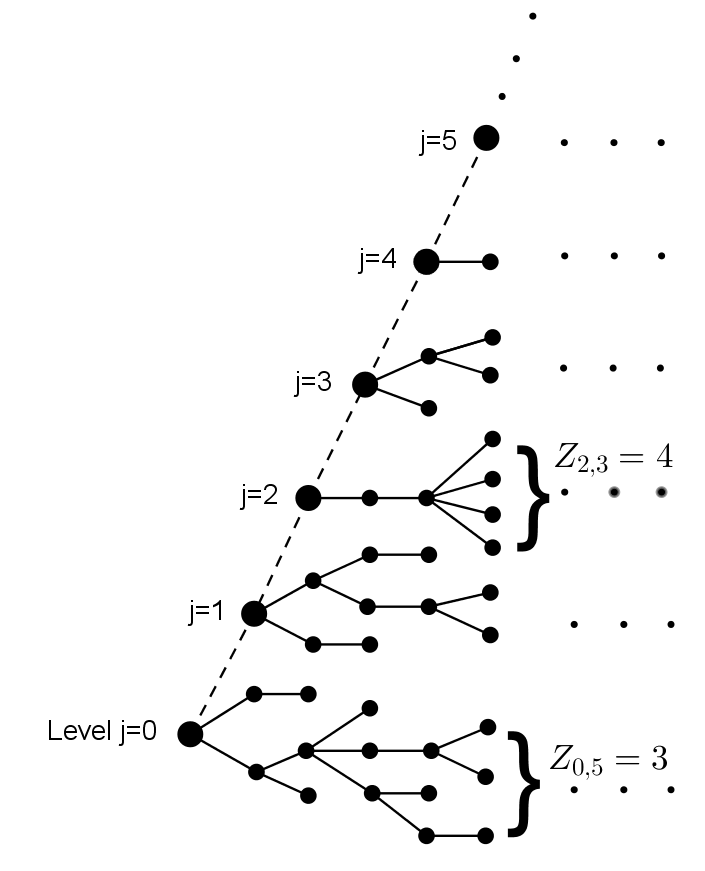}
    \caption{A tree could be trimmed at level $n_0$ so that the subtrees at a distance of $n_0+1$ along the dashed line and above were retained, and the rest of the subtrees to the right of the dashed line at a distance of $n_0$ and below are cut out. For a large $ n$-generation tree, this will increase the total resistance. The new tree at level $n_0+1$ has the same topology as the original tree but with a shifted progeny distribution.}
    \label{fig:tree-trimmed}
\end{figure}

\begin{remark}\label{remSSRT}
In the previous proof, if $w_{k,\ell}=w_k$ for all $\ell=1,\ldots,Z_k$, then the electric current branches out equally at each vertex and so the total effective resistance of the network between $r$ and $S_n$ is given by $R_n:=\sum_{k=1}^n \frac{1}{Z_k\,w_k}$. The limiting sum is almost surely divergent if $\lim_n \frac{\log Z_n \, w_n}{\log n}<1$ $\mathbb{P}\times\mathbb{Q}$-a.s. and is almost surely convergent if $\lim_n \frac{\log Z_n \, w_n}{\log n}>1$ $\mathbb{P}\times\mathbb{Q}$-a.s., and so the results follow in that special case. The speed is then given by $v_{ST}:=\dfrac{1-s_w \, s_T}{1+s_w \, s_T}$, if $s_w \, s_T<1$.
\end{remark}

\begin{proof}[Proof of Theorem \ref{theorem2}] 
The argument proceeds along the same lines as the proof of Theorem \ref{theorem1}. The recurrent regime ($\gamma_v+\alpha<1$) again follows by applying Lemma \ref{leveluniform} with $g=\gamma_v$, exactly as in the proof of Theorem \ref{theorem1}.  

If instead $\gamma_v+\alpha>1$, we repeat the same reductions to obtain the tree T$^*$ (see Figure \ref{fig:tree-trimmed}), obtained by removing the path, whose effective resistance provides an upper bound for that of the $\textbf{BPVET}w$. This modified tree is again a $\textbf{BPVET}$, with out-degree distribution $P(d^{*+}_{n,\ell}=k)=p_{n_0+n,k}$ for $k=1,\ldots,D$, and with cutset size $|\mathcal{E}^*_n|=Z^*_n$, where $Z^*_n=\sum_{\ell=1}^{Z^*_{n-1}} d^{*+}_{n-1,\ell}$. Each edge in $\mathcal{E}^*_n$ is assigned weight (conductance) $(n_0+n)^{\alpha-\epsilon}$. 

Regarding transience, \cite[Theorem 4.5]{lyons1992random} shows that the random walk on the modified tree is a.s. transient as long as
$$
\sum_{n=1}^{\infty}\frac{1}{E(Z^*_n)\,(n_0+n)^{\alpha-\epsilon}}<\infty.
$$
Because the offspring number is bounded ($1\le d^{*+}\le D$) and its mean is uniformly bounded away from zero, the necessary integrability condition is satisfied, and the criterion can therefore be applied to the shifted $\textbf{BPVET}$. In addition,
$E(Z^*_n)=\frac{E(Z_n)}{E(Z_{n_0})}$, and a Kesten–Stigum-type growth estimate for a varying-environment branching process with bounded offspring yields $E(Z_n)\asymp n^{\gamma_v}$ (see \cite{lyons1992random}). It follows that the summand has a logarithmic exponent of $-(\gamma_v+\alpha-\epsilon)$, so the series converges whenever $\gamma_v+\alpha>1$, since $\epsilon>0$ is arbitrary.
\end{proof}

We decompose the proof of Theorem \ref{theoremZ1} into several lemmas. A related form of the next lemma appears in \cite{goldsheid2007simple}; see also \cite{alili1999asymptotic}. While it could be established via mathematical induction, we instead present an electrical network argument for part (a) and give a reference for part (b).

\begin{lemma} Consider a simple random walk on a weighted line $\mathbb{Z}^+$ and starting at 0.
\begin{enumerate}
\item[(a)] The mean time to cross from $k$ to $k+1$, $\mu_k(w)=E(\tau_{k+1}\mid w)$, is given by \begin{equation}
\mu_k(w)=1+2 \sum_{j=1}^{k} \prod_{i=j}^k \dfrac{1}{r_i}.
\end{equation} and the mean hitting time of vertex $n$ is $E_0(T_n|w):=\sum_{k=0}^{n-1} \mu_k(w)$.

\item[(b)] The variance of the hitting time of vertex $n$ is $V_0(T_n|w):=\sum_{k=0}^{n-1} \sigma^2_k(w)$ and
\begin{equation}\label{sigma}
\sigma^2_k(w)=4\sum_{j=0}^{k} (\dfrac{1}{r_j}+\dfrac{1}{r_j^2}) (1+\sum_{\ell=1}^{j-1} \prod_{m=\ell}^{j-1} \dfrac{1}{r_m})^2 \prod_{i=j+1}^k \dfrac{1}{r_i}.
\end{equation}
\end{enumerate}

\end{lemma}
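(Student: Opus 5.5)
Part (a) rests on the first-step analysis of the crossing time from $k$ to $k+1$, turned into a recursion in $k$ and read through the commute-time identity. Under the quenched law, from vertex $k\ge1$ the walk moves to $k+1$ with probability $p_k=w_{k+1}/(w_k+w_{k+1})=r_{k+1}/(1+r_{k+1})$ and to $k-1$ with probability $q_k=1/(1+r_{k+1})$; at $0$ it moves to $1$ with probability one. Write $\mu_k=E(\tau_{k+1}\mid w)$ for the expected time, starting at $k$, to first reach $k+1$. One step from $k$ either lands at $k+1$ or at $k-1$. In the latter case the walk must first return to $k$, which takes $\mu_{k-1}$ in expectation, and then cross again. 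This gives
\begin{equation*}
\mu_k=1+q_k(\mu_{k-1}+\mu_k),\qquad \mu_0=1 .
\end{equation*}
Solving, $\mu_k=\frac{1}{p_k}+\frac{q_k}{p_k}\mu_{k-1}$. Substituting $1/p_k=1+\frac{q_k}{p_k}$ yields the first-order linear recursion
\begin{equation*}
\mu_k+1=\frac{q_k}{p_k}(\mu_{k-1}+1)+2 .
\end{equation*}
Unrolling it down to $\mu_0=1$ produces $1+2\sum_j\prod_i(q_i/p_i)$, that is, a sum of products of the inverse ratios $1/r_i$.

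For the electrical-network reading, I would match this with the commute-time identity. By the strong Markov property, the expected hitting time of $k+1$ from $k$ is $\frac{1}{w_{k+1}}\sum_{e\text{ left of }k+1} 2w_e$, minus the correction for the single edge $\{k,k+1\}$ counted once. This equals $1+2\sum_{j\le k}w_j/w_{k+1}$. Since $w_j/w_{k+1}=\prod_{i=j+1}^{k+1}1/r_i$, this is precisely the stated product form, up to the indexing convention $r_i\leftrightarrow$ edge $i$ that the paper uses. I would fix the convention by checking the case $k=0$ and then reindex. Because the walk on $\mathbb{Z}^+$ must pass through every intermediate vertex, the strong Markov property gives $T_n=\sum_{k=0}^{n-1}\tau_{k+1}^{(k)}$, a sum of the successive crossing times. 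Linearity of expectation then gives $E_0(T_n\mid w)=\sum_{k<n}\mu_k(w)$.

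For part (b), the same decomposition shows that the crossing times $\tau^{(k)}$ are independent under the quenched law, again by the strong Markov property, since each starts afresh at $k$. Hence the variance of $T_n$ is additive, and it only remains to compute $\sigma_k^2=V(\tau^{(k)}\mid w)$. I would take the second moment in the first-step analysis:
\begin{equation*}
\tau^{(k)}\stackrel{d}{=}1+\mathbf 1_{\{\text{left}\}}\bigl(\tau'^{(k-1)}+\tau''^{(k)}\bigr),
\end{equation*}
with independent copies. This gives a linear recursion for $\sigma_k^2$ whose inhomogeneous term involves $\mu_{k-1}^2$ and $\sigma_{k-1}^2$. Unrolling it gives a sum over $j$ of a weight $(1/r_j+1/r_j^2)$ times $\mu_{j-1}^2$, where $\mu_{j-1}$ is written through part (a), multiplied by the product $\prod_{i>j}1/r_i$.

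\textbf{Main obstacle.} The algebra in (b) is the hard step: matching the unrolled recursion exactly to the displayed formula, including the factor $4$ and the boundary term at $j=0$. Rather than redo it in full, I would verify the recursion coefficients and cite Goldsheid's explicit computation \cite{goldsheid2007simple}, which is the route the paper indicates.
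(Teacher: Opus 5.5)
Your proof is correct, and for part (a) it takes a different route from the paper. The paper computes $E_0(T_n\mid w)$ globally from the network identity $E_0(T_n\mid w)=\sum_x c_xv_x$. Here $v_x=\sum_{j=x}^{n-1}w_j^{-1}$ is the voltage of a unit current from $0$ to $n$, and $c_x=w_{x-1}+w_x$ with $w_{-1}=0$. After swapping the double sum, it reads off $\mu_k$ as the $k$-th summand $1+2w_k^{-1}\sum_{j<k}w_j$. You instead obtain $\mu_k$ locally by first-step analysis, $\mu_k+1=\rho_k(\mu_{k-1}+1)+2$ with $\rho_k=q_k/p_k$, and unroll it; this is the induction the paper mentions but skips. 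The paper's route is a one-liner given the Green-function/voltage identity, but that identity only controls first moments, which is why the paper must cite Goldsheid for (b). Your route is more elementary. It directly yields the recursion $\mu_{k+1}=1+(1+\mu_k)s_{k+1}$ that the paper uses afterwards, and it extends to second moments. The commute-time check is correct but not needed.

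Two small corrections. First, checking $k=0$ cannot fix the indexing, since $\mu_0=1$ in any convention. At $k=1$ the display reads $\mu_1=1+2/r_1$, which forces $q_1/p_1=1/r_1$, i.e.\ edge $\{j,j+1\}$ carries $w_j$, as in the paper's proof. Under your cutset convention the formula holds with $1/r_{i+1}$ in place of $1/r_i$. In general, $1/r_i$ should be read as $q_i/p_i$, which is exactly how the paper later applies the lemma with $s_k=1/(d_k^+r_{k+1})$.

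Second, in (b) the weight multiplies $(1+\mu_{j-1})^2$, not $\mu_{j-1}^2$. With that fix you need no citation. Let $L$ be the event that the first step from $k$ goes left. From $\tau_k=1+\mathbf 1_{L}(\tau'_{k-1}+\tau''_k)$ one gets $\mathrm{Var}(\mathbf 1_L S)=q_k\mathrm{Var}(S)+q_kp_k\,(E S)^2$. Using $\mu_{k-1}+\mu_k=(1+\rho_k)(1+\mu_{k-1})$ and $q_k=\rho_k/(1+\rho_k)$, this gives
\[
\sigma_k^2=\rho_k\sigma_{k-1}^2+q_k(\mu_{k-1}+\mu_k)^2=\rho_k\sigma_{k-1}^2+(\rho_k+\rho_k^2)(1+\mu_{k-1})^2 .
\]
Since $1+\mu_{k-1}=2\bigl(1+\sum_{\ell=1}^{k-1}\prod_{m=\ell}^{k-1}\rho_m\bigr)$, this is the source of the factor $4$. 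Unrolling from $\sigma_0^2=0$ gives the displayed formula exactly. The $j=0$ term vanishes because $1/r_0=w_{-1}/w_0=0$, reflecting the walk at $0$. So your ``main obstacle'' disappears, and your approach gives a self-contained proof of (b).
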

\begin{proof}
Part (a) follows from the well-known relationship, given an environment $w$, $E_0(T_n|w)=\sum_{x\in \mathbf{V}} c_x v_x$, see \cite{D-S}, where $v_x$ is the voltage at the vertex $x$ and $c_x=\sum_{y\sim x} w_{x,y}$ in a network of conductances $w_{x,y}$ connecting vertices $x$ and $y$ and a unit current injected into $0$ and removed from $n$. The voltage $v_x$ at the vertex $x\in \mathbb{Z}^+$ ($x<n$) is given by the effective resistance between $x$ and $n$, that is, $v_x=\sum_{j=x}^{n-1} w^{-1}_{j}$ and $c_x=w_{x-1}+w_{x}$. Since $v_n=0$, then $E_0(T_n|w)=\sum_{j=0}^{n-1}(w_{j-1}+w_{j})(\sum_{k=j}^{n-1} w^{-1}_{k})$ with $w_{-1}=0$. Therefore, $E_0(T_n|w)=\sum_{j=0}^{n-1}\sum_{k=j}^{n-1}(w_{j-1}+w_{j})( w^{-1}_{k})=\sum_{k=0}^{n-1}\sum_{j=0}^{k}(w_{j-1}+w_{j})( w^{-1}_{k})$. Thus, $E_0(T_n|w)=\sum_{k=0}^{n-1}( w^{-1}_{k})\sum_{j=0}^{k}(w_{j-1}+w_{j})=\sum_{k=0}^{n-1}(1+2w^{-1}_{k}\sum_{j=0}^{k-1}w_{j})$. But, $\sum_{j=0}^{k-1}\dfrac{w_{j}}{w_k}=\sum_{j=1}^{k} \prod_{i=j}^k \dfrac{1}{r_i}$. Hence, part (a) is proved.

Part (b) is true; see \cite[Lemma 3]{goldsheid2007simple}. 
\end{proof}

Let $\dfrac{1}{r_n}=s_n$ and $\bar{s}_n=\mathbb{E}_{\mathbb{Q}}(s_n)$ and $\bar{s}=\lim_n \bar{s}_n$. Note that $\mu_{k+1}(w)=1+\dfrac{1+\mu_k(w)}{r_{k+1}}$ or equivalently $\mu_{k+1}(w)=1+(1+\mu_k(w))\,s_{k+1}$, with $\mu_0=1$. If $\{r_n:n\geq1 \}$ are independent then
\begin{equation}
\mathbb{E}_{\mathbb{Q}}(\mu_k(w))=1+2 \sum_{j=1}^{k} \prod_{i=j}^k \bar{s}_i=1+2 \dfrac{\sum_{j=1}^k \prod_{i=1}^{j-1} \bar{s}_i^{-1}}{\prod_{i=1}^k \bar{s}_i^{-1}}.
\end{equation}

\begin{lemma}\label{limitmean1}
If $\{r_n:n\geq1 \}$ are independent, and
$\bar{s}<1$, then $\lim_k \mathbb{E}_{\mathbb{Q}}(\mu_k(w))=\dfrac{1+\bar{s}}{1-\bar{s}}$.
\end{lemma}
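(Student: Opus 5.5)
The plan is to work with the linear recursion satisfied by the means. Set $a_k:=\mathbb{E}_{\mathbb{Q}}(\mu_k(w))$. Since $\mu_{k+1}(w)=1+(1+\mu_k(w))\,s_{k+1}$ and $s_{k+1}$ depends only on $r_{k+1}$, it is independent of $\mu_k(w)$, which depends only on $r_1,\dots,r_k$. Taking expectations therefore gives
$$a_{k+1}=1+\bar{s}_{k+1}\,(1+a_k),\qquad a_0=1 .$$
This agrees with the closed form displayed just before the lemma. Heuristically, the candidate limit $L$ must satisfy $L=1+\bar{s}(1+L)$, so $L=(1+\bar{s})/(1-\bar{s})$. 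The remaining work is to show convergence to this fixed point.

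First, since $\bar{s}_k\to\bar{s}<1$, choose $\eta>0$ with $\bar{s}+\eta<1$ and choose $k_0$ such that $\bar{s}_k\le \bar{s}+\eta=:\theta<1$ for all $k>k_0$. Second, subtract the fixed-point equation from the recursion to get
$$a_{k+1}-L=\bar{s}_{k+1}(a_k-L)+(\bar{s}_{k+1}-\bar{s})(1+L).$$
Writing $e_k:=|a_k-L|$ and $\delta_k:=|\bar{s}_k-\bar{s}|(1+L)\to0$, this yields $e_{k+1}\le\theta\,e_k+\delta_{k+1}$ for $k\ge k_0$. Third, iterating gives
$$e_{k}\le\theta^{k-k_0}e_{k_0}+\sum_{j=k_0+1}^{k}\theta^{k-j}\delta_j .$$
The first term tends to $0$ geometrically. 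The second term is a Toeplitz-type convolution of a null sequence with a summable geometric kernel, so it also tends to $0$. One can check this by splitting the sum at $k/2$, or by citing the appendix limit lemmas on such convolution sums, in the spirit of Lemmas \ref{limitlaws3} and \ref{limitlaws4}. Hence $a_k\to L$.

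The only delicate point is finiteness: each $\bar{s}_i$ must be finite so that $e_{k_0}<\infty$. For $i>k_0$ this holds by the choice of $k_0$. For the finitely many early indices it is implicit in the existence of the limit $\bar{s}$ together with the closed-form expression for $\mathbb{E}_{\mathbb{Q}}(\mu_k(w))$. If needed, I will state explicitly that $\mathbb{E}_{\mathbb{Q}}(1/r_i)<\infty$ for all $i$. The main obstacle is the convolution-to-zero step, and it is routine. As an alternative, the closed form could be handled with a Stolz–Cesàro argument applied to $\sum_{j\le k}\prod_{i<j}\bar{s}_i^{-1}\big/\prod_{i\le k}\bar{s}_i^{-1}$. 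The denominators grow geometrically, so the ratio of increments tends to $\bar{s}/(1-\bar{s})$, and the limit again equals $1+2\bar{s}/(1-\bar{s})=(1+\bar{s})/(1-\bar{s})$.
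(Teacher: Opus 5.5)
Your proof is correct, but your main argument takes a different route from the paper's. Your closing Stolz--Ces\`aro remark is essentially what the paper does.

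The paper never uses the recursion for the means. It writes $\mathbb{E}_{\mathbb{Q}}(\mu_k(w))=1+2\sum_{j=1}^k A_j/A_{k+1}$ with $A_j=\prod_{i=1}^{j-1}\bar s_i^{-1}$. It notes that $A_{j+1}/A_j=\bar s_j^{-1}\to\bar s^{-1}>1$, so $\sum_j A_j$ diverges. It then applies the appendix ratio lemma, Lemma \ref{limitlaw2x}(b), to get $\sum_{j\le k}A_j/A_{k+1}\to 1/(\bar s^{-1}-1)=\bar s/(1-\bar s)$.

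You instead treat $a_{k+1}=1+\bar s_{k+1}(1+a_k)$ as an eventually contracting affine recursion with a vanishing perturbation. This needs nothing beyond the geometric-convolution estimate. Your route is self-contained and never divides by $\bar s_i$, so $\bar s=0$ needs no extended-real reading of Lemma \ref{limitlaw2x}. It also gives an explicit error bound. It is the same mechanism as the recursion $\eta_k=s_k(1+\eta_{k-1})$ that the paper later uses in the proof of Lemma \ref{newlemma1}. The paper's route is shorter once its appendix lemmas are available.

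Two small points need fixing.
\begin{enumerate}
\item Justify the convolution step by your split-at-$k/2$ argument, not by Lemmas \ref{limitlaws3} and \ref{limitlaws4}. Those lemmas concern polynomial growth exponents, not convolutions with a geometric kernel.
\item Finiteness of the early $\bar s_i$ does \emph{not} follow from the existence of $\lim_n\bar s_n$. If some $\bar s_i=\infty$, then $a_k=\infty$ for every $k\ge i$, since each $\bar s_k>0$. So it is a genuine implicit hypothesis, which the paper also uses silently when it writes $\bar s_i^{-1}$. You are right to state it explicitly, but your stated reason for why it is ``implicit'' is not valid.
\end{enumerate}
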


\begin{proof}
Since $\lim_j \dfrac{\prod_{i=1}^j \bar{s}_i^{-1}}{\prod_{i=1}^{j-1} \bar{s}_i^{-1}}=\lim_j \bar{s}_j^{-1}=\bar{s}^{-1}>1$ a.s., and ${\sum_{j} \prod_{i=1}^{j-1} \bar{s}_i^{-1}}$ diverges a.s. as $\bar{s}^{-1}>1$ (root test); thus, lemma \ref{limitlaw2x} (b) implies that $\lim_k \dfrac{\sum_{j=1}^k \prod_{i=1}^{j-1} \bar{s}_i^{-1}}{\prod_{i=1}^k \bar{s}_i^{-1}}=\dfrac{1}{\bar{s}^{-1}-1}$ a.s. 

\end{proof}

The following proposition puts the last lemma in perspective. 

\begin{proposition}\label{proposlimit1}
If $\{r_n:n\geq1 \}$ are independent, and
$\bar{s}<1$, then $\lim_n \dfrac{\mathbb{E}_{\mathbb{S}}(T_n)}{n}=\dfrac{1+\bar{s}}{1-\bar{s}}$.
\end{proposition}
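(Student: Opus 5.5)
The plan is to reduce the statement to a Cesàro average of the sequence $\mathbb{E}_{\mathbb{Q}}(\mu_k(w))$ and then apply Lemma \ref{limitmean1}. The tree is $\mathbb{Z}^+$, so there is no randomness from $\mathbb{P}$. The annealed measure $\mathbb{S}$ integrates the quenched law against $\mathbb{Q}$, hence $\mathbb{E}_{\mathbb{S}}(T_n)=\mathbb{E}_{\mathbb{Q}}\bigl(E_0(T_n\mid w)\bigr)$.

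First, using part (a) of the preceding lemma, I would write
\[
E_0(T_n\mid w)=\sum_{k=0}^{n-1}\mu_k(w),
\qquad
\mu_k(w)=1+2\sum_{j=1}^{k}\prod_{i=j}^{k}s_i .
\]
All terms are nonnegative, so Tonelli's theorem lets me interchange $\mathbb{E}_{\mathbb{Q}}$ with the finite sums. This gives $\mathbb{E}_{\mathbb{S}}(T_n)=\sum_{k=0}^{n-1}\mathbb{E}_{\mathbb{Q}}(\mu_k(w))$, whether or not the individual expectations are finite. Independence of the $r_i$ then gives $\mathbb{E}_{\mathbb{Q}}\prod_{i=j}^k s_i=\prod_{i=j}^k\bar s_i$. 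This is exactly the displayed formula for $\mathbb{E}_{\mathbb{Q}}(\mu_k(w))$ preceding Lemma \ref{limitmean1}.

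Second, I would invoke Lemma \ref{limitmean1}: since $\bar s<1$, we have $a_k:=\mathbb{E}_{\mathbb{Q}}(\mu_k(w))\to\frac{1+\bar s}{1-\bar s}$. One detail needs attention. The condition $\bar s<1$ forces $\bar s_i<\infty$ only for all large $i$. If finitely many early $\bar s_i$ were infinite, some $a_k$ would be infinite for every subsequent $k$, so I would add the implicit finiteness of each $\bar s_i$ as an assumption. Under that assumption every $a_k$ is finite and the sequence converges.

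Third, the Cesàro mean theorem (Stolz--Cesàro, presumably among the appendix limit lemmas such as Lemma \ref{limitlaw2x}) gives
\[
\frac{1}{n}\sum_{k=0}^{n-1}a_k\longrightarrow\frac{1+\bar s}{1-\bar s},
\]
which is the claim.

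I expect the main obstacle to be the first step, namely justifying the identity between the annealed expectation and the sum of the quenched mean crossing times: Fubini must apply, and $\mathbb{E}_{\mathbb{Q}}(\mu_k)$ must be computed as a product of means. The independence and nonnegativity noted above handle both points. The rest is a direct application of the earlier lemma and a Cesàro average.
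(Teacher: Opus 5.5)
Your proof is correct and follows the paper's route: the paper's one-line proof combines Lemma \ref{limitmean1} (convergence of $\mathbb{E}_{\mathbb{Q}}(\mu_k(w))$) with the Ces\`aro-type Lemma \ref{limitlaws2} applied with $Y_k\equiv 1$, which is the appendix lemma you wanted (not Lemma \ref{limitlaw2x}), and your Tonelli/independence justification of $\mathbb{E}_{\mathbb{S}}(T_n)=\sum_{k<n}\mathbb{E}_{\mathbb{Q}}(\mu_k(w))$ makes explicit a step the paper takes for granted. Your observation that each $\bar s_i$ must be finite is a genuine tacit hypothesis of the statement, not a flaw in your argument.
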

\begin{proof}
It follows from lemmas \ref{limitmean1} and \ref{limitlaws2}.
\end{proof}

    



Under Condition B and according to the law of total variance, $\sigma^2_k:=\mathbb{E}_{\mathbb{Q}}(\sigma^2_k(w))+\mathbb{V}_{\mathbb{Q}}(\mu_k(w))$. Thus,
\begin{equation}
\sigma^2_k=4\sum_{j=0}^{k} (E(\dfrac{1}{r_j})+E(\dfrac{1}{r_j^2})) E((1+\sum_{\ell=1}^{j-1} \prod_{m=\ell}^{j-1} \dfrac{1}{r_m})^2) \prod_{i=j+1}^k E(\dfrac{1}{r_i})+4V(\sum_{j=1}^{k} \prod_{i=j}^k \dfrac{1}{r_i})
\end{equation}
and
\begin{equation}
\sigma^2_k\leq 4\sum_{j=0}^{k} (\dfrac{1}{c_j}+\dfrac{1}{c_j^2}) (1+\sum_{\ell=1}^{j-1} \prod_{m=\ell}^{j-1} \dfrac{1}{c_m})^2 \prod_{i=j+1}^k \dfrac{1}{c_i}+4(\sum_{j=1}^{k} \prod_{i=j}^k \dfrac{1}{c_i})^2.
\end{equation}

Let $C_k=\sum_{j=1}^{k} \prod_{i=j}^k \dfrac{1}{c_i}$. But according to the proof of Lemma \ref{limitmean1}, $\lim_k C_k=\dfrac{1}{c-1}$. Also,  
$$\sum_{j=0}^{k} (\dfrac{1}{c_j}+\dfrac{1}{c_j^2}) (1+\sum_{\ell=1}^{j-1} \prod_{m=\ell}^{j-1} \dfrac{1}{c_m})^2 \prod_{i=j+1}^k \dfrac{1}{c_i}=\dfrac{1}{\prod_{i=1}^k c_i}\sum_{j=0}^{k} (1+c_j) C_j^2 \prod_{i=1}^j c_i$$ and the latter is equal to $$(1+c_{k+1}) C_{k+1}^2 c_{k+1}\dfrac{\sum_{j=0}^{k} (1+c_j) C_j^2 \prod_{i=1}^j c_i}{(1+c_{k+1})C_{k+1}^2 \prod_{i=1}^{k+1} c_i}.$$

Let $a_k=(1+c_{k}) C_{k}^2 \prod_{i=1}^{k} c_i$, then $\dfrac{a_{k+1}}{a_k}=\dfrac{(1+c_{k+1})C_{k+1}^2}{(1+c_{k})C_{k}^2} c_{k+1}$.  Thus, $\lim_k \dfrac{a_{k+1}}{a_k}=c>1$, hence lemma \ref{limitlaw2x} (b) implies that $$\lim_k \sigma^2_k \leq 4 (1+c) (\dfrac{1}{c-1})^2 c \dfrac{1}{c-1}+4 \dfrac{1}{c-1}$$ and so the limit is finite.

The hitting time of $n$, $T_n=\sum_{k=0}^{n-1}\tau_{k+1}$ where $\tau_{k+1}$ is the time between hitting $k$ and hitting $k+1$ which are independent for all $k$ given the environment, and $E(\tau_{k+1}\mid w)=\mu_k(w)$ and $V(\tau_{k+1}\mid w)=\sigma^2_k(w)$.

Recall $s_i=1/r_i$, and write $\eta_k:=\sum_{j=1}^{k}\prod_{i=j}^{k}s_i$, so that $\mu_k=1+2\eta_k$ and $\eta_k=s_k(1+\eta_{k-1})$ with $\eta_0=0$. Under Condition B one has $r_i\geq c_i$, hence $s_i\leq 1/c_i$ a.s.; since $c_i\to c>1$, there are $i_0$ and $\rho_0:=\frac{2}{1+c}<1$ with $s_i\leq \rho_0$ for $i\ge i_0$, and $0\leq s_i\leq M:=\sup_j(1/c_j)<\infty$, for all $i$. In particular, $\bar s\leq 1/c<1$ automatically, and
\begin{equation}\label{eqstar}
\eta_k=\sum_{j=1}^{k}\prod_{i=j}^{k}s_i\le M^{\,i_0}\sum_{m=i_0}^\infty\rho_0^{m-i_0}=\frac{M^{\,i_0}}{1-\rho_0}=:C_0<\infty
\quad\text{a.s.}
\end{equation}
for all $k$. Hence, $\mu_k(w)\leq 1+2 C_0$ a.s.

\begin{lemma}\label{newlemma1}
Under Condition B, the hitting time of RWRE on $\mathbb{Z}^+$ satisfies
\begin{equation}
\lim_n \dfrac{T_n}{n}=\dfrac{1+\bar{s}}{1-\bar{s}}:=\dfrac{1}{v_L} \;\; \mathbb{S}\text{-a.s.}
\end{equation}
\end{lemma}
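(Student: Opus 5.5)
The plan is to split $T_n/n$ into a quenched fluctuation term and an environmental average:
$$\frac{T_n}{n}=\frac{1}{n}\sum_{k=0}^{n-1}\bigl(\tau_{k+1}-\mu_k(w)\bigr)+\frac{1}{n}\sum_{k=0}^{n-1}\mu_k(w).$$
We then show that the first term tends to $0$ $\mathbb{S}$-a.s. and that the second tends to $(1+\bar s)/(1-\bar s)$ $\mathbb{Q}$-a.s.

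\emph{First term.} Fix an environment $w$. The increments $\tau_{k+1}-\mu_k(w)$ are independent and centered under $\mathbb{RW}^0$, with variances $\sigma_k^2(w)$. Under Condition B the same geometric domination used for \eqref{eqstar} applies to formula \eqref{sigma}. Indeed, every product $\prod_{i=j+1}^k s_i$ is at most $M^{i_0}\rho_0^{k-j-i_0}$. The inner bracket is at most $1+C_0$, and $1/r_j+1/r_j^2\le M+M^2$. Summing over $j$ gives $\sup_k\sigma_k^2(w)\le C_1<\infty$ a.s., with $C_1$ a constant. Then $\sum_k\sigma_k^2(w)/k^2<\infty$, so Kolmogorov's strong law for independent non-identically distributed summands gives the first term $\to0$ $\mathbb{RW}^0$-a.s. for $\mathbb{Q}$-a.e. $w$. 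Integrating against $\mathbb{Q}$ turns this into an $\mathbb{S}$-a.s. statement.

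\emph{Second term.} We have $\mu_k(w)=1+2\eta_k$ with $\eta_k=s_k(1+\eta_{k-1})$. The sequence $\eta_k$ is not independent, but it has exponentially decaying memory. Define the truncation $\eta_k^{(L)}:=\sum_{j=k-L+1}^{k}\prod_{i=j}^k s_i$. The error satisfies $0\le\eta_k-\eta_k^{(L)}\le C\rho_0^{L}$ for $k\ge i_0+L$, again by the domination behind \eqref{eqstar}, so it is uniformly small. For fixed $L$, the variables $\eta_k^{(L)}$ are bounded and $L$-dependent. Splitting $k$ into residue classes mod $L$ yields $L$ families of independent bounded variables. Kolmogorov's SLLN, applied to each class, gives
$$\frac1n\sum_k\bigl(\eta_k^{(L)}-\mathbb{E}_{\mathbb{Q}}\eta_k^{(L)}\bigr)\to0\quad\text{a.s.}$$
By independence, $\mathbb{E}_{\mathbb{Q}}\eta_k^{(L)}=\sum_{j=k-L+1}^k\prod_{i=j}^k\bar s_i$, and this converges as $k\to\infty$ to $\sum_{m=1}^{L}\bar s^m$. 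Hence by Ces\`aro, and then letting $L\to\infty$,
$$\frac1n\sum_{k<n}\eta_k\to\frac{\bar s}{1-\bar s}\quad\text{a.s.}$$
This is consistent with Lemma \ref{limitmean1} and Proposition \ref{proposlimit1}. Therefore $\frac1n\sum\mu_k(w)\to1+\frac{2\bar s}{1-\bar s}=\frac{1+\bar s}{1-\bar s}$.

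The main obstacle is this second step, namely passing from the convergence of expectations in Lemma \ref{limitmean1} to an almost-sure ergodic-type average for the dependent sequence $\mu_k(w)$. The $L$-truncation with uniform geometric control from Condition B is the device I would use. Condition B is essential here, because it makes the truncation error deterministic and uniformly small; without it, large values of $s_i$ could create long-memory excursions. Finally, inverting $T_n/n\to1/v_L$ to get $|X_n|/n\to v_L$ is a standard step and is not needed for the lemma itself.
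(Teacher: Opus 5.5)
Your proposal is correct. The first step is essentially the paper's: a uniform bound on $\sigma_k^2(w)$ from Condition B, then Kolmogorov's SLLN quenched, then Fubini. The only difference there is that the paper bounds $\sigma_k^2(w)$ by the deterministic $B_k$ obtained by substituting $1/c_i$ for $s_i$.

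The genuine difference is in the second step, the $\mathbb{Q}$-a.s.\ Ces\`aro limit of $\eta_k$. The paper uses a second-moment argument. From $\eta_k=\eta'_{j,k}+P_{j+1,k}\eta_j$ it derives $\mathrm{Cov}_{\mathbb{Q}}(\eta_j,\eta_k)=\bigl(\prod_{i=j+1}^k\bar s_i\bigr)\mathrm{Var}_{\mathbb{Q}}(\eta_j)$, hence $\mathrm{Var}_{\mathbb{Q}}(\bar\eta_n)=O(1/n)$. It then gets a.s.\ convergence along $n=m^2$ by Chebyshev and Borel--Cantelli, fills the gaps using monotonicity of $\sum_{k\le n}\eta_k$, and identifies the limit of the means through Lemma \ref{limitmean1}.

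Your truncation $\eta_k^{(L)}$ is exactly the paper's $\eta'_{k-L,k}$, and your error term is $P_{k-L+1,k}\eta_{k-L}\le C_0\rho_0^{L}$. So you use the same decomposition pathwise rather than inside covariances. This gives a.s.\ convergence directly from the SLLN for independent bounded summands on each residue class mod $L$, with no covariance computation and no subsequence/sandwich step. It also produces the limit $\sum_{m\ge1}\bar s^{m}$ without invoking Lemma \ref{limitmean1}, and it would work for any functional of the $s_i$ with geometrically fading memory, not only this linear recursion.

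The price is that your route needs the almost-sure domination $\prod_{i=k-L+1}^{k}s_i\le\rho_0^{L}$, so Condition B enters pathwise and essentially. The paper's covariance decay involves only the annealed means $\bar s_i$ plus a uniform variance bound. That could in principle be obtained under moment assumptions weaker than B, although in the paper it too comes from B.

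When you write it out, state that $L\to\infty$ along a countable set. Also state that $\frac1n\sum_{k<n}(\eta_k-\eta_k^{(L)})\le C_0\rho_0^{L}+o(1)$, so the $\limsup$ and $\liminf$ of $\bar\eta_n$ lie within $C_0\rho_0^{L}$ of $\sum_{m=1}^{L}\bar s^{m}$.
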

\begin{proof}
We begin by showing the quenched SLLN for $T_n$. Each term appearing in $\sigma^2_k(w)$ is nondecreasing in the variables $s_i$; see equation \eqref{sigma}. Because $s_i\leq 1/c_i$ almost surely, it follows that $\sigma^2_k(w)\leq B_k$, where $B_k$ is obtained from $\sigma^2_k(w)$ by replacing every $s_i$ with $1/c_i$. Using the calculation just before this lemma (taking $a_k=(1+c_k)C_k^2\prod_{i\le k}c_i$ and noting that $a_{k+1}/a_k\to c>1$), we get $B_\ast:=\sup_k B_k<\infty$, and hence $\sum_k\sigma^2_k(w)/k^2<\infty$ almost surely.

Given $w$, the $\tau_{k+1}$ are independent with mean $\mu_k(w)$ and variance $\sigma^2_k(w)$. Since $\sum_k\sigma^2_k(w)/k^2<\infty$, then the generalized Kolmogorov law; see e.g.,\cite{robbins1985convergence}, \cite[Corollary 7.4.1]{resnick2013probability}, implies that $$(T_n-E_0(T_n\mid w))/n\to 0\qquad \mathbb{RW}^r \text{-a.s.} $$

Since $E_0(T_n\mid w)=\sum_{k=0}^{n-1}\mu_k(w)
=n+2\sum_{k=0}^{n-1}\eta_k(w)$, it suffices to prove
\begin{equation}\label{eqcesaro}
\bar\eta_n(w):=\frac1n\sum_{k=0}^{n-1}\eta_k(w)\longrightarrow\frac{\bar s}{1-\bar s}=:L_\eta
\qquad\mathbb{Q}\text{-a.s.}
\end{equation}
By Lemma \ref{limitmean1}, we have $\mathbb{E}_{\mathbb{Q}}(\mu_k(w))\to(1+\bar s)/(1-\bar s)$. Consequently $\mathbb{E}_{\mathbb{Q}}(\eta_k(w))\to L_\eta$ and, by the Ces\`aro lemma, $\mathbb{E}_{\mathbb{Q}}(\bar\eta_n(w))\to L_\eta$. Therefore, to obtain the limit in \eqref{eqcesaro}, it suffices to prove that $\bar\eta_n-\mathbb{E}_{\mathbb{Q}}(\bar\eta_n)\to0$ $\mathbb{Q}$-a.s.

Recall that $\eta_k=s_k(1+\eta_{k-1})$ with $\eta_0=0$. Iterating this recursion yields, for any $1\le j<k$,
$$
\eta_k=\sum_{m=j+1}^k\prod_{i=m}^k s_i+\Bigl(\prod_{i=j+1}^k s_i\Bigr)\eta_j=: \eta'_{j,k}+P_{j+1,k}\eta_j .
$$
Observe that $P_{j+1,k}$ and $\eta'_{j,k}$ are functions only of $s_{j+1},\dots,s_k$ and hence are independent of $\eta_j$. It follows that
$$
\mathrm{Cov}_{\mathbb{Q}}(\eta_j,\eta_k)
=\mathbb{E}_{\mathbb{Q}}(P_{j+1,k})\,\mathrm{Var}_{\mathbb{Q}}(\eta_j)
=\Bigl(\prod_{i=j+1}^k\bar s_i\Bigr)\mathrm{Var}_{\mathbb{Q}}(\eta_j).
$$
Using \eqref{eqstar}, $\mathrm{Var}_{\mathbb{Q}}(\eta_j)\le C_0^2$, and for $j\ge i_0$ we also have $\prod_{i=j+1}^k\bar s_i\le \rho_0^{\,k-j}$ with some $\rho_0<1$. Hence, for $j\ge i_0$,
$$
\mathrm{Cov}_{\mathbb{Q}}(\eta_j,\eta_k)\le C_0^2\rho_0^{\,k-j}.
$$
Therefore, $\mathrm{Var}_{\mathbb{Q}}(\sum_{k=1}^n\eta_k)=O(n)$, since the covariances coming from the finitely many indices with $j<i_0$ contribute only $O(n)$. Consequently $\mathrm{Var}_{\mathbb{Q}}(\bar\eta_n)=O(1/n)$, so $\bar\eta_n\to L_\eta$ in $L^2(\mathbb{Q})$.

Along the subsequence $n_m=m^2$, these variances are summable, and thus $\bar\eta_{m^2}\to L_\eta$ almost surely. Finally, since $\eta_k\ge0$, for $m^2\le n<(m+1)^2$ we have
$$
\frac{S_{m^2}}{(m+1)^2}\le\bar\eta_n\le\frac{S_{(m+1)^2}}{m^2},
$$
where $S_n=\sum_{k=1}^n\eta_k$ and both bounds converge to $L_\eta$. This establishes \eqref{eqcesaro}.
Combining them all, we get $T_n/n\to(1+\bar s)/(1-\bar s)$ $\mathbb{S}$-a.s.
\end{proof}

The following lemma is used to find the speed through a limiting relationship between time and space; see \cite{zeitouni2009random,peterson2013lecture}. We will give the proof here for completeness.
\begin{lemma}\label{newlemma3}
The speed of a transient RWRE $\{X_n\}$ on $\mathbb{Z}^+$ is 
\begin{equation}
\lim_n \dfrac{X_n}{n}=\lim_n \dfrac{n}{T_n} \;\; a.s.
\end{equation}
\end{lemma}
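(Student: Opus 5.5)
The idea is the standard inversion between position and first-passage times, using that $T_n$ is nondecreasing and that the walk on $\mathbb{Z}^+$ moves by nearest-neighbour steps. I assume the limit $\lim_n T_n/n=:1/v\in(0,\infty]$ exists a.s.; under Condition B this is Lemma \ref{newlemma1}, and for $v=0$ we read $n/T_n\to0$. Transience gives $T_n<\infty$ for every $n$ and $T_n\to\infty$. For each time $n$ I will work with the running maximum $M_n:=\max_{k\le n}X_k$, which is characterised by $T_{M_n}\le n<T_{M_n+1}$.

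\emph{Step 1 (the maximum).} From $T_{M_n}\le n<T_{M_n+1}$,
$$\frac{M_n}{M_n+1}\cdot\frac{M_n+1}{T_{M_n+1}}<\frac{M_n}{n}\le\frac{M_n}{T_{M_n}} .$$
Since $M_n\to\infty$ (transience), both outer ratios tend to $v$ along the subsequence $m=M_n$, so $M_n/n\to v$. For $v=0$ only the right inequality is needed.

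\emph{Step 2 (upper bound).} Since $X_n\le M_n$, we get $\limsup_n X_n/n\le v$. This settles the case $v=0$, because $X_n\ge0$.

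\emph{Step 3 (lower bound, $v>0$; the main obstacle).} We must rule out large backtracking, i.e.\ show $(M_n-X_n)/n\to0$. I would argue through times: fix $\epsilon>0$ and suppose $X_n\le(v-2\epsilon)n$ infinitely often. At such an $n$ we have $M_n\ge(v-\epsilon)n$ for large $n$. Put $a=\lfloor(v-2\epsilon)n\rfloor$ and $b=\lceil(v-\epsilon)n\rceil$. Then $T_b\le n$, while the walk is below $a$ at time $n$ and, by transience, must later reach $b'=\lceil(v+\epsilon)n\rceil$, so $T_{b'}-n\ge T_{b'}-T_a^{+}$, where $T_a^+$ denotes the last visit to $a$ before $T_{b'}$. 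Using only $T_k/k\to1/v$ gives $T_{b'}\approx b'/v$ and $T_b\approx b/v$, which does not immediately contradict the picture.

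I would therefore use a finer tool instead: the number of returns across level $a$ after first hitting $b$. Given $w$, the probability that the walk ever returns from $b$ to $a$ equals the ratio of effective resistances $\sum_{j\ge b}w_j^{-1}/\sum_{j\ge a}w_j^{-1}$. Under Condition B, with $w_j\ge C\prod c_i$ growing geometrically, this is at most $C\rho^{\,b-a}$ for some $\rho<1$. With $b-a\ge\epsilon n$, the bound $\sum_n C\rho^{\epsilon n}<\infty$ and Borel--Cantelli show that eventually the walk never drops by $\epsilon n$ below a level $\ge(v-\epsilon)n$ once it has reached it. Hence $X_n\ge M_n-\epsilon n$ eventually, and so $\liminf_n X_n/n\ge v-\epsilon$.

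Combining Steps 1--3 and letting $\epsilon\downarrow0$ gives $\lim_n X_n/n=\lim_n n/T_n$ a.s. Without Condition B, I would replace the geometric resistance bound by $\sum_{j\ge b}w_j^{-1}\to0$ together with the quenched growth of $w$, accepting a weaker summability argument along subsequences.
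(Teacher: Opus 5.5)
Your Steps 1 and 2 are exactly the paper's argument, but Step 3 has a genuine gap. As written, your lower bound $\liminf_n X_n/n\ge v$ holds only under Condition B (geometric growth of $w$), and that is not a hypothesis of the lemma. The lemma assumes only transience, together with the existence of $\lim_n T_n/n$. Your closing sentence on the general case is not an argument. Knowing $\sum_{j\ge b}w_j^{-1}\to0$ tells you that returns to a \emph{fixed} level become unlikely. It gives no summable bound on the probability of a backtrack of size $\epsilon n$, so Borel--Cantelli has nothing to work with unless you add quantitative assumptions on the environment. Even under Condition B, you would need a union bound over the at most $n$ possible values of the random level $M_n$, giving $\sum_n nC\rho^{\epsilon n}<\infty$. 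With that fix, your route covers the paper's applications (Theorem \ref{theoremZ1} under Condition B and Theorem \ref{theoremZ2} under Condition B$'$), but not the lemma as stated.

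The missing idea is that no probabilistic estimate is needed at all. You said that $T_k/k\to1/v$ ``does not immediately contradict the picture''; that conclusion comes from comparing with the deterministic level $b=\lceil(v-\epsilon)n\rceil$. For that level, $n-T_b\approx\epsilon n/v\ge\epsilon n$, which does leave time to descend by $\epsilon n$. Compare instead with $M_n$ itself. At time $T_{M_n}$ the walk is at $M_n$, and it moves by nearest-neighbour steps, so by time $n$ it can have descended by at most $n-T_{M_n}$ levels: $$0\le M_n-X_n\le n-T_{M_n}.$$ By Step 1 and $T_k/k\to1/v$, for $v>0$ you get $T_{M_n}/n=(T_{M_n}/M_n)(M_n/n)\to v^{-1}\cdot v=1$. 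Hence $(M_n-X_n)/n\to0$ pathwise, for every environment, and $\liminf_n X_n/n\ge v$ follows. This is exactly how the paper concludes, via $\frac{R_n}{n}-\frac{X_n}{n}\le 1-\frac{T_{R_n}}{R_n}\frac{R_n}{n}$ with $R_n=M_n$.
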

\begin{proof}
Since $\{X_n\}$ is transient, we have $X_n\to \infty$ $\mathbb{S}$-a.s. Define the range of the RWRE to be $R_n=\max_{1\leq k\leq n}X_k$, which also tends to infinity. It is clear that $T_{R_n}\leq n<T_{R_n+1}$. Hence,
$$
\frac{T_{R_n}}{R_n}\leq \frac{n}{R_n}<\frac{T_{R_n+1}}{R_n},
$$
and therefore $\lim_n \frac{R_n}{n}=\lim_n \frac{n}{T_n}$ $\mathbb{S}$-a.s. This conclusion remains valid even in the case $\lim_n \frac{T_n}{n}=\infty$, where the same inequality yields $\lim_n \frac{R_n}{n}=0$ $\mathbb{S}$-a.s.

Next we show that $R_n$ and $X_n$ share the same linear speed. Since $X_n\leq R_n$, we obtain
$$
0\leq \limsup_n \frac{X_n}{n}\leq \limsup_n \frac{R_n}{n}.
$$
Thus, if the right-hand side is $0$, then the speed is $0$ as well. In what follows we exclude the case $\lim_n \frac{T_n}{n}=\infty$, because we have just seen that then $\lim_n \frac{X_n}{n}=0$.

On the other hand, note that $R_n-X_n\leq n-T_{R_n}$, and consequently
$$
\frac{R_n}{n}-\frac{X_n}{n}\leq 1-\frac{T_{R_n}}{R_n}\frac{n}{R_n}.
$$
It follows that $\limsup_n\Big(\frac{R_n}{n}-\frac{X_n}{n}\Big)\leq 0$, and hence $\limsup_n \frac{R_n}{n}\leq \liminf_n \frac{X_n}{n}$. Therefore,
$$
\lim_n \frac{X_n}{n}=\lim_n \frac{R_n}{n}=\lim_n \frac{n}{T_n}\qquad \mathbb{S}\text{-a.s.}
$$
\end{proof}

The following is the proof of Theorem \ref{theoremZ2} for the speed on $\textbf{RSST}w$.

\begin{proof}[Proof of Theorem \ref{theoremZ2}] 
Using the reduction outlined above, the process $\{|X_n|\}$ may be regarded as an RWRE on $\mathbb{Z}^+$ as in Theorem \ref{theoremZ1}, but with inverse-ratios $s_k=1/(d_k^+r_{k+1})$ in place of $1/r_k$. One verifies that the proof of Lemma \ref{newlemma1} remains valid under this modification, provided that $\bar s$ is replaced by $s_{Tw}$.

We begin by establishing \emph{transience.} Under Condition B$'$, we have $s_k\leq 1/\tilde c_k$ a.s., with $\tilde c_k\to \tilde c>1$; this serves the same purpose as the estimate $s_k\le 1/c_k$ in the original argument. Moreover,
$$
\mathbb{E}(\log d_k^+r_{k+1})\ge \log \tilde c_k \to \log \tilde c>0,
$$
and hence the effective drift tends to $+\infty$, implying that the walk is transient. Consequently, Lemma \ref{newlemma3} applies.

Let $\mu_k=1+2\sum_{j\le k}\prod_{i=j}^k s_i$, where $s_i=1/(d_i^+r_{i+1})$. Each summand in $\sigma_k^2$ is monotone nondecreasing in the collection $\{s_i\}$, and because $s_i\le 1/\tilde c_i$ we get the estimate $\sigma_k^2\le B_k$, with $B_k$ the deterministic sequence obtained by replacing every $s_i$ by $1/\tilde c_i$. As in the previous argument, $\sup_k B_k<\infty$, and therefore $\sum_k \sigma_k^2/k^2<\infty$ almost surely. Given the environment, the random variables $\tau_{k+1}$ are independent with mean $\mu_k$, so the generalized Kolmogorov law implies $(T_n-E_0(T_n\mid d^+,r))/n\to0$ almost surely.

Define $\eta_k=\sum_{j\le k}\prod_{i=j}^k s_i$. Then $\eta_k=s_k(1+\eta_{k-1})$, and Condition B$'$ together with \eqref{eqstar} implies the almost sure bound $\eta_k\le C_0$. As in Lemma \ref{newlemma1}, decompose $\eta_k$ into
$$
\eta_k=P_{j+1,k}\eta_j+\eta'_{j,k},
$$
where $P_{j+1,k}=\prod_{i=j+1}^k s_i$, and both $P_{j+1,k}$ and $\eta'_{j,k}$ are independent of $\eta_j$. It follows that
$$
\mathrm{Cov}(\eta_j,\eta_k)=\Bigl(\prod_{i=j+1}^k\bar s_i\Bigr)\mathrm{Var}(\eta_j).
$$
Furthermore, for $j$ large enough one has $\prod_{i=j+1}^k\bar s_i\le\tilde\rho_0^{\,k-j}$ for some $\tilde\rho_0<1$, and hence the variance of the partial sums grows like $O(n)$. Therefore $\bar\eta_n\to L_\eta$ in $L^2$, and by the squares-subsequence argument also almost surely. Applying Lemma \ref{limitmean1} to the effective environment yields $\mathbb{E}(\mu_k)\to(1+s_{Tw})/(1-s_{Tw})$, so $L_\eta=s_{Tw}/(1-s_{Tw})$ and consequently
$$
E_0(T_n\mid d^+,r)/n\to(1+s_{Tw})/(1-s_{Tw}) \quad \text{a.s.}
$$
Putting it all together gives $T_n/n\to(1+s_{Tw})/(1-s_{Tw})$ $\mathbb{S}$-a.s. Hence, Lemma \ref{newlemma3} implies
$$
X_n/n\to n/T_n\to(1-s_{Tw})/(1+s_{Tw})=v_{Tw}\quad \mathbb{S}\text{-a.s.}
$$
If $s_{Tw}\ge1$, the effective environment satisfies $\bar s\ge1$, and thus $T_n/n\to\infty$ and $v_{Tw}=0$ by Lemma \ref{newlemma3}. 
\end{proof}

\section{Conclusion}
Random walks on randomly evolving graphs of polynomial size in a random environment, such as the line and certain random trees, have been studied less than the case of exponentially growing trees. In this work, we examined the type problem for a random walk on a random tree in a non-homogeneous random environment. The environment is assumed to evolve randomly and independently of the trees. The logarithmic growth rates of the tree and the environment yield a P\'olya-type threshold for the model. We also analyzed the speed of the walk on the line and on random spherically symmetric trees with a random environment. Our findings extend earlier results for the same setting with a cascade-like environment. Two natural problems remain unresolved: the behavior at criticality, $\gamma+\alpha=1$ and $\gamma_v+\alpha=1$, where the resistance series diverges only logarithmically; and the speed on $\textbf{BPVET}w$, for which we provide an upper bound and state conjecture \ref{conj_2}.

\section*{Declaration of generative AI and AI-assisted technologies in the manuscript preparation process}
During the preparation of this work, the author(s) used Claude and Grammarly to edit and correct the English language grammar and flow. After using these tools, the author(s) reviewed and edited the content as needed and take full responsibility for the content of the published article.

\section*{Appendix}

Note that, for $\beta>-1$
\begin{equation}
\lim_n \frac{\sum_{k=1}^n k^\beta}{n^{\beta+1}}=\frac{1}{\beta+1}
\end{equation}
Also, for $\beta<-1$,
\begin{equation}\label{eq0}
\lim_n \frac{\sum_{k=n}^\infty
k^\beta}{n^{\beta+1}}=\frac{-1}{\beta+1}>0
\end{equation}
see \cite[p.41]{knopp}, and
\begin{equation}
\lim_n \frac{\sum_{k=1}^n \frac{1}{k}}{\log n}=1
\end{equation}

\begin{lemma}\label{limitlaws2}
Consider the two sequences $X_n,Y_n$ of nonnegative random
variables. If $\sum_n Y_n$ is divergent $a.s.$, then $\lim_n
\frac{X_n}{Y_n}=L$ $a.s.$ implies
\begin{equation}
\lim_n \frac{\sum_{k=1}^n X_k}{\sum_{k=1}^n Y_k}=L\quad a.s.
\end{equation}
\end{lemma}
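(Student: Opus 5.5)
This is the Stolz--Ces\`aro theorem applied pathwise, with $\sum_k Y_k$ serving as the divergent denominator. Fix an $\omega$ in the almost-sure event on which both $\sum_n Y_n(\omega)=\infty$ and $X_n(\omega)/Y_n(\omega)\to L$ hold; we may intersect the two events because each has probability one. From then on the argument is deterministic for real sequences $x_n=X_n(\omega)\ge 0$, $y_n=Y_n(\omega)\ge 0$. Write $A_n=\sum_{k\le n}x_k$ and $B_n=\sum_{k\le n}y_k$, so that $B_n\uparrow\infty$.

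First take $L$ finite. Given $\epsilon>0$, choose $N$ such that $(L-\epsilon)y_k\le x_k\le (L+\epsilon)y_k$ for all $k>N$. Summing these inequalities from $N+1$ to $n$ gives
$$
(L-\epsilon)(B_n-B_N)\le A_n-A_N\le (L+\epsilon)(B_n-B_N).
$$
Divide through by $B_n$. Since $A_N$ and $B_N$ are fixed while $B_n\to\infty$, the terms $A_N/B_n$ and $B_N/B_n$ vanish. Hence $\limsup_n A_n/B_n\le L+\epsilon$ and $\liminf_n A_n/B_n\ge L-\epsilon$, and letting $\epsilon\downarrow 0$ gives the claim.

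Next take $L=\infty$, if that case is allowed. For any $M$ we have $x_k\ge M y_k$ eventually, and the same computation shows $\liminf_n A_n/B_n\ge M$.

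The only delicate point is interpretive rather than technical. The hypothesis $X_n/Y_n\to L$ implicitly requires $Y_n>0$ for all large $n$. Alternatively, one can read it as: $x_k$ is squeezed between $(L\pm\epsilon)y_k$ eventually, with the convention $0/0$ treated that way. Either reading gives the eventual two-sided bound, and that bound is all the argument uses. Nonnegativity of $y_n$ matters because it makes $B_n$ monotone and ensures multiplying the inequalities by $y_k$ keeps their direction. Divergence of $\sum_n Y_n$ matters because it kills the finitely many initial terms. No step here is a real obstacle; the proof is a standard Ces\`aro-type argument.
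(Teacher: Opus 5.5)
Your proof is correct and follows the paper's argument essentially line for line. Both restrict to the intersection of the two probability-one events, squeeze $X_k$ between $(L\pm\epsilon)Y_k$ beyond some $n_0(\omega)$, sum, divide by $\sum_{k\le n}Y_k$, and use divergence to discard the finitely many initial terms. Your $\liminf$/$\limsup$ formulation is slightly cleaner than the paper's, which writes $\lim$ in the final sandwich before the limit is known to exist, and the $L=\infty$ case you add is a harmless extension.
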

\begin{proof}
Let $t_n=\frac{X_n}{Y_n}$. Then, as $n \longrightarrow \infty$,
$t_n \longrightarrow L$ on an event $A$ such that $P(A^c)=0$.
Also, $\lim_n \sum_{k=1}^n Y_k=\infty$ on an event $B$ such that
$P(B^c)=0$. Note that $P(A\cap B)^c=0$. Pick $\omega \in A\cap B$,
$\epsilon>0$ then there exists $n_0(\omega)$ sufficiently large
such that $k>n_0(\omega)$, yields
\begin{equation*}
L-\epsilon \leq t_k(\omega)\leq L+\epsilon
\end{equation*}
For $n>n_0$,
\begin{equation*}
\sum_{k=n_0+1}^n (L-\epsilon) Y_k(\omega)\leq \sum_{k=n_0+1}^n
X_k(\omega)\leq \sum_{k=n_0+1}^n (L+\epsilon)Y_k(\omega)
\end{equation*}
and so 
\begin{equation*} (\sum_{k=1}^n
Y_k(\omega)-\sum_{k=1}^{n_0} Y_k(\omega)) (L-\epsilon)\leq
\sum_{k=1}^n X_k(\omega)-\sum_{k=1}^{n_0} X_k(\omega)\leq
(\sum_{k=1}^n Y_k(\omega)-\sum_{k=1}^{n_0} Y_k(\omega))
(L+\epsilon)
\end{equation*}

\begin{equation*}
(1-\frac{\sum_{k=1}^{n_0} Y_k(\omega)}{\sum_{k=1}^n Y_k(\omega)})
(L-\epsilon)\leq \frac{\sum_{k=1}^n X_k(\omega)}{\sum_{k=1}^n
Y_k(\omega)}-\frac{\sum_{k=1}^{n_0} X_k(\omega)}{\sum_{k=1}^n
Y_k(\omega)}\leq (1-\frac{\sum_{k=1}^{n_0}
Y_k(\omega)}{\sum_{k=1}^n Y_k(\omega)}) (L+\epsilon)
\end{equation*} 
Hence, since $\lim_n \sum_{k=1}^n Y_k(\omega)=\infty$,
\begin{equation*}
(L-\epsilon)\leq \lim_n \frac{\sum_{k=1}^n
X_k(\omega)}{\sum_{k=1}^n Y_k(\omega)}\leq (L+\epsilon)
\end{equation*}
Since $\epsilon$ is arbitrarily small,
\begin{equation*}
\lim_n \frac{\sum_{k=1}^n X_k(\omega)}{\sum_{k=1}^n
Y_k(\omega)}=L.
\end{equation*}
Whence,
\begin{equation*}
\lim_n \frac{\sum_{k=1}^n X_k}{\sum_{k=1}^n Y_k}=L\quad
\text{on}\;A\cap B.
\end{equation*}
which was to be proven.
\end{proof}

We must note that the converse is not always true, unless
$\lim_n \frac{X_n}{Y_n}$ does exist. Note that in the following lemma, $L$ cannot be less than 1, but it can be equal to 1. See also Lemma \ref{limitlaws5} below for the case when $L=1$.

\begin{lemma}\label{limitlaws21}
Consider the two sequences $X_n,Y_n$ of nonnegative random
variables. If $\sum_n Y_n$ is converges $a.s.$, then $\lim_n
\frac{X_n}{Y_n}=L$ $a.s.$ implies
\begin{equation}
\lim_n \frac{\sum_{k=n}^\infty X_k}{\sum_{k=n}^\infty Y_k}=L\quad a.s.
\end{equation}
\end{lemma}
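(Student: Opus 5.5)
The plan is to work pathwise, on the almost sure event where both $\sum_k Y_k<\infty$ and $X_k/Y_k\to L$ hold, and compare the tails termwise, as in the proof of Lemma \ref{limitlaws2}. The difference from that lemma is that the tails of a convergent series are small. Hence no head terms have to be discarded; the $\epsilon$-comparison is applied directly to every term of the tail.

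Let $A$ be the event on which $t_k:=X_k/Y_k\to L$, and $B$ the event on which $\sum_k Y_k<\infty$. Then $\mathbb{P}((A\cap B)^c)=0$. Fix $\omega\in A\cap B$ and $\epsilon>0$, and choose $n_0(\omega)$ such that $(L-\epsilon)Y_k(\omega)\le X_k(\omega)\le (L+\epsilon)Y_k(\omega)$ for all $k\ge n_0$. This requires $Y_k>0$ eventually, which is implicit in the ratio being defined. If $Y_k=0$ for some large $k$, the hypothesis forces $X_k=0$ as well, with the convention $0/0=L$, and the inequality still holds. For $n\ge n_0$, sum these inequalities over $k\ge n$. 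All terms are nonnegative, and the upper bound is a convergent series. It follows that $\sum_{k\ge n}X_k(\omega)$ converges and satisfies
$$(L-\epsilon)\sum_{k=n}^\infty Y_k(\omega)\le \sum_{k=n}^\infty X_k(\omega)\le (L+\epsilon)\sum_{k=n}^\infty Y_k(\omega).$$

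Next divide by the tail $\sum_{k\ge n}Y_k(\omega)$. This division is the one point that needs care, and it is where I expect the only obstacle: the tail must be strictly positive for all large $n$. If it vanished from some index on, the ratio would be $0/0$. I would therefore state this as an implicit assumption, namely that $Y_k>0$ infinitely often, which is automatic if $X_k/Y_k$ is defined for all large $k$. Under it, $L-\epsilon\le \sum_{k\ge n}X_k/\sum_{k\ge n}Y_k\le L+\epsilon$ for every $n\ge n_0(\omega)$. Hence the $\liminf$ and $\limsup$ of the ratio lie in $[L-\epsilon,L+\epsilon]$. Letting $\epsilon\downarrow 0$ along a countable sequence gives the limit $L$ on $A\cap B$, which is therefore almost sure.

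If $L=\infty$ is allowed, the same scheme works with the one-sided bound $X_k\ge MY_k$ for any $M$. I would not invoke any earlier result beyond mimicking the structure of the proof of Lemma \ref{limitlaws2}. The remark preceding the statement (that $L$ ``cannot be less than 1'') does not affect the argument.
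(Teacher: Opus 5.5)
Your proposal is correct and follows the paper's proof essentially verbatim: work pathwise on $A\cap B$, sandwich $X_k$ between $(L\pm\epsilon)Y_k$ for $k>n_0(\omega)$, sum over the tail $k\ge n$, and divide by $\sum_{k\ge n}Y_k$. Your extra care that the tail of $Y$ must be strictly positive, so the ratio is never $0/0$, makes explicit an assumption the paper leaves implicit.
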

\begin{proof}
Let $t_n=\frac{X_n}{Y_n}$. Then, as $n \longrightarrow \infty$,
$t_n \longrightarrow L$ on an event $A$ such that $P(A^c)=0$.
Also, $\lim_n \sum_{k=n}^\infty Y_k=0$ on an event $B$ such that
$P(B^c)=0$. Note that $P(A\cap B)^c=0$. Pick $\omega \in A\cap B$,
$\epsilon>0$ then there exists $n_0(\omega)$ sufficiently large
such that $k>n_0(\omega)$, yields
\begin{equation*}
L-\epsilon \leq t_k(\omega)\leq L+\epsilon
\end{equation*}
For $n>n_0$,
\begin{equation*}
\sum_{k=n}^\infty (L-\epsilon) Y_k(\omega)\leq \sum_{k=n}^\infty
X_k(\omega)\leq \sum_{k=n}^\infty (L+\epsilon)Y_k(\omega)
\end{equation*}
Hence, 
\begin{equation*}
(L-\epsilon)\leq \lim_n \frac{\sum_{k=n}^\infty
X_k(\omega)}{\sum_{k=n}^\infty Y_k(\omega)}\leq (L+\epsilon)
\end{equation*}
Since $\epsilon$ is arbitrarily small,
\begin{equation*}
\lim_n \frac{\sum_{k=n}^\infty X_k(\omega)}{\sum_{k=n}^\infty
Y_k(\omega)}=L.
\end{equation*}
Whence,
\begin{equation*}
\lim_n \frac{\sum_{k=n}^\infty X_k}{\sum_{k=n}^\infty Y_k}=L\quad
\text{on}\;A\cap B.
\end{equation*}
which was to be proven.
\end{proof}

\begin{lemma}\label{limitlaw2x}
For any sequence $\{a_n:n\geq 1\}$ such that $\lim_n \frac{a_{n+1}}{a_n}=L$. If $L \geq 1$, then
\begin{enumerate}
\item[(a)] $\lim_n \frac{\sum_{k=1}^{n+1} a_k}{\sum_{k=1}^n a_k}=L$.
\item[(b)] $\lim_n \frac{a_{n+1}}{\sum_{k=1}^n a_k}=L-1$.
\item[(c)] $\lim_n \frac{a_{n}}{\sum_{k=1}^n a_k}=\dfrac{L-1}{L}$.
\end{enumerate}
If $L < 1$, then
\begin{enumerate}
\item[(d)] $\lim_n \frac{\sum_{k=n+1}^{\infty} a_k}{\sum_{k=n}^{\infty} a_k}=L$.
\item[(e)] $\lim_n \frac{a_{n}}{\sum_{k=n}^{\infty} a_k}=1-L$.
\item[(f)] $\lim_n \frac{a_{n+1}}{\sum_{k=n}^{\infty} a_k}=\dfrac{1-L}{L}$.
\end{enumerate}
\end{lemma}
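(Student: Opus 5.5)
We need to prove Lemma \ref{limitlaw2x}, parts (a)–(f). Throughout, write $S_n=\sum_{k=1}^n a_k$ and, when $L<1$, $R_n=\sum_{k=n}^\infty a_k$. We assume, as is implicit in the statement, that $a_n>0$ for all large $n$, so that the ratios make sense.

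\emph{Case $L\ge 1$.} First, $S_n\to\infty$. If $L>1$ then $a_n\to\infty$; if $L=1$ then $a_{n+1}\ge (1-\epsilon)a_n$ eventually, and $\sum a_n$ could a priori converge. So the divergence claim needs care, and we expect the case $L=1$ to be the main obstacle.

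We will prove (b) first: $\frac{a_{n+1}}{S_n}\to L-1$. The natural tool is Stolz–Cesàro with numerator $a_{n+1}$ and denominator $S_n$. The difference quotient is
$$\frac{a_{n+2}-a_{n+1}}{a_{n+1}}=\frac{a_{n+2}}{a_{n+1}}-1\to L-1.$$
This requires $S_n\uparrow\infty$. When $L>1$ this holds, and (b) follows. When $L=1$ and $\sum a_n=\infty$, Stolz again gives the limit $0$. When $L=1$ and $\sum a_n<\infty$, the claim is immediate: $a_{n+1}\to 0$ while $S_n$ tends to a positive limit, so the ratio tends to $0=L-1$. Thus (b) holds in every subcase, and the obstacle disappears.

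Parts (a) and (c) then follow from (b) by algebra. For (a),
$$\frac{S_{n+1}}{S_n}=1+\frac{a_{n+1}}{S_n}\to L.$$
For (c),
$$\frac{a_n}{S_n}=\frac{a_{n+1}/S_n}{a_{n+1}/a_n}\to\frac{L-1}{L},$$
using $L\ge 1>0$.

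\emph{Case $L<1$.} By the ratio test $\sum a_n$ converges, so $R_n$ is finite and $R_n\downarrow 0$. For (e), fix $\epsilon>0$ with $L+\epsilon<1$. Choose $n_0$ so that for $k\ge n_0$ the ratio $a_{k+1}/a_k$ lies in $[L-\epsilon,L+\epsilon]$. Then for $n\ge n_0$ we have $a_n(L-\epsilon)^j\le a_{n+j}\le a_n(L+\epsilon)^j$. Summing the geometric series gives
$$\frac{1}{1-L+\epsilon}\le \frac{R_n}{a_n}\le \frac{1}{1-L-\epsilon}.$$
Letting $\epsilon\to0$ yields $a_n/R_n\to 1-L$, which is (e).

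If $L=0$, the lower bound should be read with $(L-\epsilon)$ replaced by $0$; this still gives $R_n/a_n\to1$. Equivalently, one can apply Stolz–Cesàro to the tail sums, or appeal to Lemma \ref{limitlaws21} with $Y_k=a_k$ and $X_k=a_{k+1}$ for a ratio-to-tail comparison.

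The remaining parts follow from (e) by algebra. For (d),
$$\frac{R_{n+1}}{R_n}=1-\frac{a_n}{R_n}\to L.$$
For (f),
$$\frac{a_{n+1}}{R_n}=\frac{a_{n+1}}{a_n}\cdot\frac{a_n}{R_n}\to L(1-L).$$
Note that the stated value $\frac{1-L}{L}$ in (f) does not match this computation. It would correspond instead to $a_{n}/R_{n+1}$, since
$$\frac{a_n}{R_{n+1}}=\frac{a_n/R_n}{R_{n+1}/R_n}\to\frac{1-L}{L}.$$
I would flag this discrepancy and prove whichever normalization is intended, because the proof is identical in either case.
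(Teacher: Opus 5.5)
Your proof is correct and follows essentially the paper's route. The paper obtains (a) from Lemma \ref{limitlaws2} with $X_k=a_{k+1}$, $Y_k=a_k$ (a Stolz--Ces\`aro statement for partial sums) and (d) from Lemma \ref{limitlaws21} (its tail analogue), then derives the remaining parts algebraically. You instead apply Stolz--Ces\`aro to (b) directly and replace Lemma \ref{limitlaws21} by an explicit geometric sandwich for (e). These are the same ideas in a different order. In the sandwich, also take $\epsilon<L$ when $L>0$, or use $\max(L-\epsilon,0)$ throughout, so that the lower bound is legitimate for every $L$, not only for $L=0$.

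Your version is in fact more careful than the paper's in two places. First, Lemma \ref{limitlaws2} requires $\sum_n a_n=\infty$, which can fail when $L=1$ (e.g.\ $a_n=n^{-2}$). The paper's one-line argument for (a) therefore does not cover that subcase, whereas your separate treatment does. Second, your flag on (f) is correct. For $a_n=L^n$ with $0<L<1$ one gets $a_{n+1}/\sum_{k\ge n}a_k=L(1-L)$, not $(1-L)/L$, and the latter is moreover undefined at $L=0$. So (f) as printed is false and does not ``follow from (d)'' as the paper asserts; $(1-L)/L$ is the limit of $a_n/\sum_{k\ge n+1}a_k$. The error is harmless downstream, since only part (b) is used later (in Lemma \ref{limitmean1} and in the variance bound before Lemma \ref{newlemma1}).
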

\begin{proof}
Part (a) follows from Lemma \ref{limitlaws2}. Parts (b) and (c) follow from part (a) directly. Part (d) follows from Lemma \ref{limitlaws21}. Parts (e) and (f) follow from part (d) directly.

\end{proof}

\begin{lemma}\label{limitlawsb3}
If $a_n$ is a sequence of real numbers such that $a_n \geq 0$ for
all $n$, then
\begin{enumerate}
\item[(a)] $\lim_n n\,a_n=\lim_n \frac{\sum_{k=1}^n a_k}{\log n}$.
\item[(b)] $\lim_n n\,a_n=\lim_n \frac{\sum_{k=1}^n \log(1+a_k)}{\log n}$ if $a_n\geq
a_{n+1}$ for all $n$.
\end{enumerate}
Provided that $\lim_n n\,a_n$ exists.
\end{lemma}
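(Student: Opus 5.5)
The plan is to reduce both parts to the Ces\`aro-type comparison of Lemma \ref{limitlaws2}, applied to deterministic sequences (a deterministic sequence is a special case of a sequence of random variables). The comparison sequence is always $Y_k=1/k$. It is nonnegative, $\sum_k Y_k$ diverges, and $\sum_{k=1}^n 1/k\sim\log n$ by the harmonic-series limit recorded at the beginning of the Appendix. Throughout, write $L:=\lim_n n\,a_n\in[0,\infty]$, which exists by hypothesis.

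For part (a), I take $X_k=a_k\ge0$. Then $X_k/Y_k=k\,a_k\to L$, so Lemma \ref{limitlaws2} gives $\sum_{k\le n}a_k\big/\sum_{k\le n}1/k\to L$. Multiplying by $\sum_{k\le n}(1/k)/\log n\to1$ gives the claim.

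The case $L=\infty$ is not literally covered by the statement of Lemma \ref{limitlaws2}. I would handle it with the same argument, one-sided. For any $M$, eventually $k a_k\ge M$, so $\sum_{k\le n}a_k\ge M\bigl(\sum_{k\le n}1/k-C_M\bigr)$ for a constant $C_M$. Dividing by $\log n$ and letting $n\to\infty$ and then $M\to\infty$ gives divergence of the ratio to $\infty$.

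For part (b), I take $X_k=\log(1+a_k)$ and reduce to part (a) by showing $k\log(1+a_k)\to L$. The monotonicity hypothesis is what controls $a_k$. Since $a_k$ is nonincreasing and nonnegative, it converges to some $a\ge0$. I then split into cases.

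\begin{enumerate}
\item[(i)] \emph{$L<\infty$.} Then $a_k\le L'/k$ eventually, so $a_k\to0$.
\begin{enumerate}
\item[$\bullet$] If some $a_k=0$, then $a_j=0$ for all $j\ge k$, so $L=0$ and both sides are $0$.
\item[$\bullet$] Otherwise $\log(1+a_k)/a_k\to1$. Hence $k\log(1+a_k)=k a_k\cdot\frac{\log(1+a_k)}{a_k}\to L$, and Lemma \ref{limitlaws2} with $Y_k=1/k$ finishes as in (a).
\end{enumerate}
\item[(ii)] \emph{$L=\infty$.} I use the elementary bound $\log(1+x)\ge x/(1+x)\ge x/(1+a_1)$, valid for $0\le x\le a_1$, which applies to every term because $a_k\le a_1$. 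This gives $k\log(1+a_k)\ge k a_k/(1+a_1)\to\infty$. The one-sided argument from (a) then shows that the ratio diverges to $\infty$.
\end{enumerate}

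The main obstacle is not any computation but the bookkeeping of the boundary cases: the infinite limit, which Lemma \ref{limitlaws2} does not state explicitly, and the vanishing of $a_k$ in part (b). It is exactly at this step that the monotonicity assumption in (b) is used: it guarantees $a_k\le a_1$ and the existence of $\lim a_k$, so that $\log(1+a_k)$ and $a_k$ remain comparable uniformly in $k$.
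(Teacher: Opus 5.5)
Your proposal is correct and follows the paper's route. Part (a) is Lemma \ref{limitlaws2} with $Y_k=1/k$ combined with the harmonic-series asymptotic, and part (b) reduces to (a) by showing $n\log(1+a_n)\to L$, which the paper gets from $(1+a_n)^n\to e^{L}$ and you get from $\log(1+a_k)/a_k\to1$; your explicit handling of $L=\infty$ and of vanishing terms fills in boundary cases that the paper's proof passes over silently.
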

\begin{proof}
\begin{enumerate}
\item From lemma \ref{limitlaws2},
\begin{equation*}
\begin{array}{l c l} \lim_n n\,a_n&=&\lim_n
\frac{a_n}{\frac{1}{n}}\\
\\
   &=&\lim_n \frac{\sum_{k=1}^n a_k}{\sum_{k=1}^n \frac{1}{k}}\\
\\
   &=&\lim_n \frac{\sum_{k=1}^n a_k}{\log n}\,
   \frac{\log n}{\sum_{k=1}^n \frac{1}{k}}\\
\\
   &=&\lim_n \frac{\sum_{k=1}^n a_k}{\log n}
\end{array}
\end{equation*}

\item $\lim_n (1+a_n)^n=e^{\lim_n n\,a_n}$ and so $\lim_n \log(1+a_n)^n=\lim_n n\,a_n$.
\\
Applying part (1) to $b_n=\log (1+a_n)$
\begin{equation*}
\lim_n \frac{\sum_{k=1}^n b_k}{\log n}=\lim_n n\,b_n=\lim_n
n\,a_n.
\end{equation*}
\end{enumerate}
\end{proof}

Define the partial sum of a
sequence $X_k$ to be $\widehat{S}_n=\sum_{k=1}^n X_k$. Also,
define the remainder to be $\widehat{R}_n=\sum_{k=n}^\infty X_k$.

\begin{lemma}\label{limitlaws3}
Consider a sequence $X_n$ of nonnegative random variables.
\newline 1) If $\lim_n \frac{\log X_n}{\log n}=L>-1$ $a.s.$, then $\lim_n
\widehat{S}_n$ is almost surely divergent and
\begin{equation}
\lim_n \frac{\log \widehat{S}_n}{\log n}=L+1\quad a.s.
\end{equation}
2) If $\lim_n \frac{\log X_n}{\log n}=L<-1$ $a.s.$, then $\lim_n
\widehat{S}_n$ is almost surely convergent and
\begin{equation}
\lim_n \frac{\log \widehat{R}_n}{\log n}=L+1\quad a.s.
\end{equation}
3) If $\lim_n \frac{\log X_n}{\log n}=-1$ $a.s.$, then
\begin{equation}
\lim_n \frac{\log \widehat{S}_n}{\log n}=0\quad a.s.
\end{equation}
\end{lemma}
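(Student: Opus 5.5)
The plan is to work pathwise: fix $\omega$ in the almost sure event where $\log X_n(\omega)/\log n\to L$, and translate this into two-sided power bounds. Given $\epsilon>0$, there is $n_0(\omega)$ such that
$$n^{L-\epsilon}\le X_n(\omega)\le n^{L+\epsilon}\qquad\text{for all } n>n_0(\omega).$$
Each of the three parts then reduces to comparing $\widehat{S}_n$ or $\widehat{R}_n$ with deterministic power sums. For those we use the appendix asymptotics $\sum_{k\le n}k^\beta\sim n^{\beta+1}/(\beta+1)$ when $\beta>-1$, and $\sum_{k\ge n}k^\beta\sim n^{\beta+1}/(-\beta-1)$ when $\beta<-1$. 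At the end we let $\epsilon=1/m\downarrow 0$ and intersect countably many full-measure events.

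\emph{Part 1.} Take $L>-1$ and choose $\epsilon<L+1$, so that $L-\epsilon>-1$.
\begin{itemize}
\item Lower bound: $\widehat{S}_n\ge\sum_{k=n_0+1}^n k^{L-\epsilon}$, which is asymptotic to $n^{L-\epsilon+1}/(L-\epsilon+1)$. This tends to infinity, so $\widehat{S}_n$ diverges, and $\liminf_n\log\widehat{S}_n/\log n\ge L+1-\epsilon$.
\item Upper bound: $\widehat{S}_n\le \widehat{S}_{n_0}+\sum_{k\le n}k^{L+\epsilon}\le C(\omega)+C'n^{L+1+\epsilon}$. Since $L+1+\epsilon>0$, this gives $\limsup_n\log\widehat{S}_n/\log n\le L+1+\epsilon$.
\end{itemize}
Letting $\epsilon\downarrow 0$ gives $\log\widehat{S}_n/\log n\to L+1$.

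\emph{Part 2.} Take $L<-1$ and choose $\epsilon$ with $L+\epsilon<-1$. For $n>n_0$ we have the sandwich
$$\sum_{k\ge n}k^{L-\epsilon}\;\le\;\widehat{R}_n\;\le\;\sum_{k\ge n}k^{L+\epsilon}.$$
The right side is finite, which gives almost sure convergence of $\sum_n X_n$. By the tail asymptotic, the two sides are of order $n^{L+1-\epsilon}$ and $n^{L+1+\epsilon}$ up to constants. Taking logarithms and dividing by $\log n$ bounds the liminf and limsup by $L+1\mp\epsilon$. Letting $\epsilon\downarrow 0$ gives the claim.

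\emph{Part 3.} Take $L=-1$; this is the delicate boundary case. The upper bound still works: $\widehat{S}_n\le C(\omega)+\sum_{k\le n}k^{-1+\epsilon}=O(n^{\epsilon})$, so $\limsup_n\log\widehat{S}_n/\log n\le\epsilon$.

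The lower bound is the main obstacle. The naive comparison $\sum_k k^{-1-\epsilon}$ converges, so $\widehat{S}_n$ might stay bounded. In that case $\log\widehat{S}_n/\log n\to 0$ anyway, as long as $\widehat{S}_n$ is eventually bounded away from zero. This holds because the $X_k$ are nonnegative and, from the power lower bound, eventually positive. So $\widehat{S}_n\ge X_{n_1}>0$ for $n\ge n_1$, and hence $\liminf_n\log\widehat{S}_n/\log n\ge 0$.

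Thus no divergence is needed. The argument only uses that $\widehat{S}_n$ is nondecreasing and eventually bounded below by a positive constant. Combining the two bounds with $\epsilon\downarrow 0$ gives the limit $0$. The points to keep in mind are the use of $\log$ of positive quantities, which is guaranteed eventually by the lower power bound, and the countable intersection over $\epsilon$.
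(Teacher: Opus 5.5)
Your proposal is correct and follows essentially the same route as the paper: a pathwise sandwich $k^{L-\epsilon}\le X_k(\omega)\le k^{L+\epsilon}$ for $k>n_0(\omega)$, compared with the deterministic power-sum and tail asymptotics, and then $\epsilon\downarrow 0$. Your use of $\liminf$ and $\limsup$, and your explicit lower bound in part 3 (monotonicity of $\widehat{S}_n$ plus one fixed positive term), make rigorous what the paper compresses into a one-line appeal to its sandwich inequality.
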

\begin{proof}
Let $t_n=\frac{\log X_n}{\log n}$. Then, as $n \longrightarrow
\infty$, $t_n \longrightarrow L$ on an event $A$ such that
$P(A^c)=0$. Therefore, $X_n=n^{t_n}$ and so
$\widehat{S}_n=\sum_{k=1}^n X_k=\sum_{k=1}^n k^{t_k}$. Let $\omega
\in A$ and $\epsilon>0$ be given: then there exists $n_0(\omega)$
sufficiently large such that for $k>n_0(\omega)$,
\begin{equation*}
L-\epsilon \leq t_k(\omega)\leq L+\epsilon
\end{equation*}
For $n>n_0$,
\begin{equation*}
\sum_{k=n_0+1}^n k^{L-\epsilon}\leq \sum_{k=n_0+1}^n X_k(\omega)
\leq \sum_{k=n_0+1}^n k^{L+\epsilon}
\end{equation*}
Therefore,
\begin{equation}\label{eq1}
\sum_{k=n_0+1}^n k^{L-\epsilon}\leq
\widehat{S}_n(\omega)-\widehat{S}_{n_0}(\omega) \leq
\sum_{k=n_0+1}^n k^{L+\epsilon}
\end{equation}
\newline 1) If $L>-1$ then for arbitrary small $\epsilon$ the
limit of the left-hand side of the inequality diverges and so is
$\lim_n \widehat{S}_n(\omega)$. Therefore, $\lim_n \widehat{S}_n$
is divergent on $A$.\newline 2) If $L<-1$ then for arbitrary small
$\epsilon$, the limit of the right-hand side of the inequality
converges and so is $\lim_n \widehat{S}_n(\omega)$. Therefore,
$\lim_n \widehat{S}_n$ is convergent on $A$.

Now, in case that $L>-1$, from \eqref{eq1}, 
\begin{equation}\label{eq2b}
 \frac{\log (\sum_{k=1}^n
k^{L-\epsilon}-\sum_{k=1}^{n_0} k^{L-\epsilon})}{\log n}\leq
\frac{\log (\widehat{S}_n(\omega)-\widehat{S}_{n_0}(\omega))}{\log
n}\leq \frac{\log (\sum_{k=1}^n k^{L+\epsilon}-\sum_{k=1}^{n_0}
k^{L+\epsilon})}{\log n}
\end{equation}

The limiting value of the left-hand side of the inequality would
be
\begin{equation*}
\lim_n \frac{\log \frac{(\sum_{k=1}^n
k^{L-\epsilon}-\sum_{k=1}^{n_0}
k^{L-\epsilon})}{n^{L+1-\epsilon}}+\log (n^{L+1-\epsilon})}{\log
n}=L+1-\epsilon
\end{equation*}
The limiting value of the right-hand side of the inequality would
be
\begin{equation*}
\lim_n \frac{\log \frac{(\sum_{k=1}^n
k^{L+\epsilon}-\sum_{k=1}^{n_0}
k^{L+\epsilon})}{n^{L+1+\epsilon}}+\log (n^{L+1+\epsilon})}{\log
n}=L+1+\epsilon
\end{equation*}
and since in case that $L>-1$, $\lim_n \widehat{S}_n$ is divergent
$a.s.$ then for $\omega\in A$,
\begin{equation*}
\lim_n \frac{\log
(\widehat{S}_n(\omega)-\widehat{S}_{n_0}(\omega))}{\log n}=\lim_n
\frac{\log
(1-\frac{\widehat{S}_{n_0}(\omega)}{\widehat{S}_n(\omega)})+\log
\widehat{S}_n(\omega)}{\log n}=\lim_n \frac{\log
\widehat{S}_n(\omega)}{\log n}
\end{equation*}
So
\begin{equation*}
(L+1)-\epsilon\leq \lim_n \frac{\log \widehat{S}_n(\omega)}{\log
n}\leq (L+1)+\epsilon
\end{equation*}
Hence,
\begin{equation*}
\lim_n \frac{\log \widehat{S}_n(\omega)}{\log n}=L+1.
\end{equation*}
Whence,
\begin{equation*}
\lim_n \frac{\log \widehat{S}_n}{\log n}=L+1\quad \text{on}\;A.
\end{equation*}
In case that $L<-1$, pick $n>n_0$ and hence for $\omega \in A$
\begin{equation*}
\sum_{k=n}^\infty k^{L-\epsilon}\leq \sum_{k=n}^\infty X_k(\omega)
\leq \sum_{k=n}^\infty k^{L+\epsilon}
\end{equation*}
Using equation \eqref{eq0}
\begin{equation*}
(L+1)-\epsilon\leq \lim_n \frac{\log \widehat{R}_n(\omega)}{\log
n}\leq (L+1)+\epsilon
\end{equation*}
So,
\begin{equation*}
\lim_n \frac{\log \widehat{R}_n}{\log n}=L+1 \quad a.s.
\end{equation*}
In case that $L=-1$, it follows from inequality \eqref{eq1} that
\begin{equation*}
-\epsilon\leq \lim_n \frac{\log \widehat{S}_n(\omega)}{\log n}\leq
\epsilon
\end{equation*}
and the result follows.
\end{proof}

\begin{lemma}\label{limitlaws4}
Consider a sequence $X_n$ of nonnegative random variables.
\newline 1) If $\lim_n \frac{\log X_n}{\log n}=\infty$ $a.s.$, then
$\lim_n \widehat{S}_n$ is almost surely divergent and
\begin{equation*}
\lim_n \frac{\log \widehat{S}_n}{\log n}=\infty \quad a.s.
\end{equation*}
\newline 2) If $\lim_n \frac{\log X_n}{\log n}=-\infty$ $a.s.$, then
$\lim_n \widehat{S}_n$ is almost surely convergent and
\begin{equation*}
\lim_n \frac{\log \widehat{R}_n}{\log n}=-\infty \quad a.s.
\end{equation*}
\end{lemma}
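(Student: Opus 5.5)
The plan mirrors the proof of Lemma \ref{limitlaws3}, with the finite exponents $L\pm\epsilon$ replaced by arbitrarily large (or small) constants. Set $t_n=\log X_n/\log n$ and fix an event $A$ of full probability on which $t_n\to\infty$ in part 1), or $t_n\to-\infty$ in part 2). Work pointwise on $A$. Fix an arbitrary constant $M>0$. Then there is $n_0(\omega)$ such that $t_k(\omega)\ge M$ for all $k>n_0$ in part 1), and $t_k(\omega)\le -M$ for all $k>n_0$ in part 2). Equivalently, $X_k\ge k^{M}$, respectively $X_k\le k^{-M}$, for $k>n_0$.

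For part 1), I would argue as follows.
\begin{itemize}
\item Since $X_k\ge 0$, for $n>n_0$ we have $\widehat S_n\ge \sum_{k=n_0+1}^n k^{M}$. Taking $M>-1$, the right-hand side diverges, so $\widehat S_n\to\infty$ on $A$.
\item For the rate, the simplest route is the single-term bound $\widehat S_n\ge X_n\ge n^{M}$. This gives $\log\widehat S_n/\log n\ge M$ for all large $n$, without needing the first appendix asymptotic for $\sum_{k\le n}k^{\beta}$.
\item Hence $\liminf_n \log\widehat S_n/\log n\ge M$. Since $M$ is arbitrary, the limit is $+\infty$ on $A$.
\end{itemize}

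For part 2), the steps run in the opposite direction.
\begin{itemize}
\item Choose $M>1$. Then for $n>n_0$ we have $\widehat R_n\le \sum_{k=n}^\infty k^{-M}<\infty$. Since $X_k\ge0$, it follows that $\widehat S_n$ is nondecreasing and bounded, hence convergent on $A$.
\item Equation \eqref{eq0} with $\beta=-M<-1$ gives $\sum_{k\ge n}k^{-M}\le C_M\, n^{1-M}$ for large $n$, where $C_M$ is any constant exceeding $1/(M-1)$.
\item Taking logarithms, $\log\widehat R_n\le \log C_M+(1-M)\log n$, so $\limsup_n \log\widehat R_n/\log n\le 1-M$.
\item Letting $M\to\infty$ gives the limit $-\infty$.
\end{itemize}

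One caveat concerns part 2): if $\widehat R_n=0$ (all remaining terms vanish), we interpret $\log 0=-\infty$, which is consistent with the claim. Apart from this, the only mildly delicate point is the order of quantifiers. The threshold $n_0$ depends on both $\omega$ and $M$, so the conclusion is first obtained for each fixed $M$ on the single event $A$, and only afterwards do we send $M\to\infty$. No new null sets are introduced, since $A$ does not depend on $M$. I do not expect a genuine obstacle; the proof is a routine adaptation of Lemma \ref{limitlaws3}.
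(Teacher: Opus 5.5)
Your proposal is correct and follows essentially the paper's argument: on the full-probability event, compare $X_k$ with $k^{\pm M}$ beyond $n_0(\omega,M)$, then let $M\to\infty$. The differences are cosmetic. For the growth rate you use the single-term bound $\widehat S_n\ge X_n\ge n^{M}$ instead of the paper's partial-sum comparison $\widehat S_n-\widehat S_{n_0}>\sum_{k=n_0+1}^{n}k^{M}$, and you write out part 2) explicitly via \eqref{eq0}, which the paper leaves as ``the same argument.''
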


\begin{proof}
1) Let $t_n=\frac{\log X_n}{\log n}$ such that $t_n\rightarrow
\infty$ as $n\rightarrow \infty$ on an event $A$ and $P(A^c)=0$.
Hence, let $\omega \in A$ and arbitrary large $M> 0$ be given,
then there exists $n_0(\omega)$ sufficiently large such that for
$k>n_0(\omega)$,
\begin{equation*}
t_k(\omega)>M
\end{equation*}
and so
\begin{equation*}
X_k(\omega)>k^M
\end{equation*}
and for $n>n_0$
\begin{equation*}
\widehat{S}_n(\omega)-\widehat{S}_{n_0}(\omega)>\sum_{k=n_0+1}^{n}
k^M
\end{equation*}
and the results follow. \newline 2) It is quite easy using the same argument.
\end{proof}

The following lemma gives an alternative form of Raabe's test.

\begin{lemma}\label{limitlaws5}
For any nonnegative sequence $\{a_n:n\geq 1\}$ such that $\lim_n \dfrac{a_{n+1}}{a_n}=1$ and $\lim_n\dfrac{\log a_n}{\log n}=L$,
\begin{enumerate}
\item[(a)] $\lim_n n(\dfrac{a_{n+1}}{a_n}-1)=L$.
\item[(b)] If $L \geq -1$, $\lim_n \dfrac{n a_{n}}{\sum_{k=1}^{n} a_k}=1+L$.
\item[(c)] If $L<-1$, then $\lim_n \dfrac{n a_{n}}{\sum_{k=n}^{\infty} a_k}=-1-L$.
\end{enumerate}
\end{lemma}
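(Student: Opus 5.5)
The plan is to move everything to the logarithmic scale and compare with the harmonic series, in the same way Lemma \ref{limitlawsb3} compares $\sum a_k$ with $\sum 1/k$. Set $b_n:=\log(a_{n+1}/a_n)$. Since $a_{n+1}/a_n\to1$, we have $b_n\to0$. Then $\log(1+x)=x(1+o(1))$ as $x\to0$ gives
$$n\Bigl(\frac{a_{n+1}}{a_n}-1\Bigr)=n\,b_n\,(1+o(1)).$$
So part (a) is equivalent to $n b_n\to L$. The hypothesis $\log a_n/\log n\to L$ says $\sum_{k<n}b_k/\log n\to L$, i.e.
$$\frac{\sum_{k<n} (k b_k)\cdot\frac1k}{\sum_{k<n}\frac1k}\to L .$$
So $L$ is the logarithmic (Riesz) mean of the sequence $k b_k$.

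\textbf{Step 1 (limit points).} If $nb_n$ has a limit, Lemma \ref{limitlaws2} with $X_k=b_k$ and $Y_k=1/k$ forces that limit to be $L$. The same sandwich argument from the proof of Lemma \ref{limitlaws2} gives $\liminf_n nb_n\le L\le\limsup_n nb_n$. This identifies the only possible value in (a).

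\textbf{Step 2 (Tauberian upgrade; the main obstacle).} To show that $\liminf_n nb_n$ and $\limsup_n nb_n$ actually coincide, I would use a Tauberian argument on the logarithmic scale. Put $t=\log n$; then $\log a_n=Lt+o(t)$, and $nb_n$ plays the role of the derivative $d\log a_n/dt$. A derivative can be recovered from $f(t)=Lt+o(t)$ only under a slow-oscillation condition on $nb_n$, namely that $|mb_m-nb_n|$ is small whenever $\log m-\log n$ is small. The only regularity available from the statement is $a_{n+1}/a_n\to1$, i.e. $b_n=o(1)$. I would first try to use this as the Tauberian condition, via the standard device: average $nb_n$ over the block $n\le k\le\lambda n$, use Step 1 on that block, and let $\lambda\downarrow1$.

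This is the step I expect to fail without further structure. The condition $b_n=o(1)$ controls $b_n$, not $nb_n$. For example, $a_n=n^{L}\exp(\sqrt{\log n}\,\sin\sqrt{\log n})$ satisfies both hypotheses, yet $n(a_{n+1}/a_n-1)$ appears to oscillate with a nonvanishing amplitude of order $\tfrac12$ and so does not converge. So I would check this example carefully before proceeding. If it holds up, then (a) can only be established in its natural form: whenever the limit exists, it equals $L$ (Step 1). That is the same proviso under which Lemma \ref{limitlawsb3} is stated. I would then record that reading explicitly.

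\textbf{Step 3 ((b) and (c)).} With $nb_n\to L$ in hand, write $a_k/a_n=\exp(-\sum_{j=k}^{n-1}b_j)$. For $k=xn$ this tends to $x^{L}$ locally uniformly in $x$, so the sequence is regularly varying with index $L$. For $L>-1$, a Karamata-type Riemann-sum argument, or equivalently Lemma \ref{limitlaws2} applied to $X_n=na_n-(n-1)a_{n-1}$ and $Y_n=a_n$ (note $X_n/Y_n\to1+L$ by (a)), gives $\sum_{k\le n}a_k\sim na_n/(1+L)$, which is (b). The case $L=-1$ is the degenerate endpoint: the ratio then tends to $0$, which matches the formula $1+L=0$, using divergence of the sum from Lemma \ref{limitlaws3}(3). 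For $L<-1$, the tail version, Lemma \ref{limitlaws21} with $X_n=(n-1)a_{n-1}-na_n$ and $Y_n=a_n$ (here $X_n/Y_n\to-1-L$), gives (c). Summability of the tail comes from Lemma \ref{limitlaws3}(2).
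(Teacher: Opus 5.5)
Your Step 2 diagnosis is right and your example holds up, so the statement is false as written. A refutation of (a) together with a conditional proof of (b) and (c) is the correct outcome.

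To confirm the example, take $f(t)=Lt+\sqrt t\,\sin\sqrt t$ with $\log a_n=f(\log n)$. Then $\log a_n/\log n=L+\sin\sqrt{\log n}/\sqrt{\log n}\to L$, and
\[
f'(t)=L+\tfrac12\cos\sqrt t+\frac{\sin\sqrt t}{2\sqrt t},\qquad f''(t)=O(t^{-1/2}).
\]
The mean value theorem gives $nb_n=L+\tfrac12\cos\sqrt{\log n}+o(1)$. Hence $b_n=O(1/n)$ and $a_{n+1}/a_n\to1$. Since $\sqrt{\log n}\to\infty$ with increments tending to $0$, the limit points of $n(a_{n+1}/a_n-1)=nb_n+O(1/n)$ fill $[L-\tfrac12,L+\tfrac12]$.

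The same example also breaks (b) and (c). Put $h(t)=f(t)+t$, so that $na_n=e^{h(\log n)}$ and $\sum_{k\le n}a_k\sim\int_0^{\log n}e^{h(t)}\,dt$. For $L>-\tfrac12$ one has $h'\ge L+\tfrac12-o(1)>0$ and $h''\to0$, which gives $na_n/\sum_{k\le n}a_k=h'(\log n)+o(1)=L+1+\tfrac12\cos\sqrt{\log n}+o(1)$. Symmetrically, (c) fails for $L<-\tfrac32$.

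The paper's proof contains exactly the gap you anticipated. It passes from $\lim_n\log a_n/\log n=L$ to $\lim_n n\log(a_{n+1}/a_n)=L$ by Lemma \ref{limitlawsb3}. That lemma is stated for nonnegative terms and only ``provided that $\lim_n n\,a_n$ exists,'' so used legitimately it yields no more than your Step 1. The next identity, $\lim_n(1+x_n)^n=\exp(\lim_n nx_n)$, again presupposes the limit. Parts (b) and (c) are then obtained by applying (a) to $\widehat S_n$ and $\widehat R_n$, so they inherit the gap.

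Even if one adds the hypothesis that $\lim_n n(a_{n+1}/a_n-1)$ exists, the paper's route would still need the analogous limit for $\widehat S_n$ and $\widehat R_n$, and that is essentially (b) and (c) themselves. Your Step 3 avoids this. You telescope $X_n=na_n-(n-1)a_{n-1}$ against $Y_n=a_n$ in Lemma \ref{limitlaws2}, and $X_n=(n-1)a_{n-1}-na_n$ in Lemma \ref{limitlaws21}, using $na_n\to0$ when $L<-1$. This derives (b) and (c) directly from existence of the limit in (a) for $a_n$ alone. So the natural corrected lemma is: if in addition $\lim_n n(a_{n+1}/a_n-1)$ exists, then (a)--(c) hold.

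One repair is needed in Step 3. At $L=-1$, Lemma \ref{limitlaws3}(3) does not give divergence, and $\sum_k a_k$ may converge, for example with $a_n=1/(n\log^2 n)$. In that case $X_n/Y_n\to0$ gives $\sum_n|X_n|<\infty$, so $na_n$ converges. Its limit must be $0$ because $\sum_n (na_n)/n<\infty$, so the ratio still tends to $0=1+L$.
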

\begin{proof}
For part (a), $$\dfrac{\log a_n}{\log n}=\dfrac{\log(a_1)+\log \prod_{i=1}^{n-1} \frac{a_{i+1}}{a_{i}}}{\log n}=\dfrac{\log(a_1)+\sum_{i=1}^{n-1} \log  \frac{a_{i+1}}{a_{i}}}{\log n}.$$ Hence, $$\lim_n \dfrac{\log a_n}{\log n}=\lim_n \dfrac{\sum_{i=1}^{n-1} \log  \frac{a_{i+1}}{a_{i}}}{\log n}=\lim_n n \log  \frac{a_{n+1}}{a_{n}}$$ by lemma \ref{limitlawsb3}.
On the other hand, $n \log  \frac{a_{n+1}}{a_{n}}=n \log  (\frac{a_{n+1}}{a_{n}}-1+1)=\log [ (1+(\frac{a_{n+1}}{a_{n}}-1))^n ]$. But since $\lim_n (1+(\frac{a_{n+1}}{a_{n}}-1))^n =\exp(\lim_n n (\frac{a_{n+1}}{a_{n}}-1)) $ then the result follows.

For part (b), $$L+1 =\lim_n \dfrac{\log \sum_{k=1}^{n} a_k}{\log n} =
\lim_n n(\dfrac{\sum_{k=1}^{n+1} a_k}{\sum_{k=1}^{n} a_k}-1)=\lim_n \dfrac{n a_{n+1}}{\sum_{k=1}^{n} a_k}.$$

For part (c), $$L+1 =\lim_n \dfrac{\log \sum_{k=n}^{\infty} a_k}{\log n} =
\lim_n n(\dfrac{\sum_{k=n+1}^{\infty} a_k}{\sum_{k=n}^{\infty} a_k}-1)=\lim_n \dfrac{-n a_{n}}{\sum_{k=n}^{\infty} a_k}.$$ On the other hand, $L<-1$ implies that $\sum_n a_n$ is convergent, see lemma \ref{limitlaws3}, then
$$0 =\lim_n \dfrac{\log \sum_{k=1}^{n} a_k}{\log n} =
\lim_n n(\dfrac{\sum_{k=1}^{n+1} a_k}{\sum_{k=1}^{n} a_k}-1)=\lim_n \dfrac{n a_{n+1}}{\sum_{k=1}^{n} a_k}.$$ 
\end{proof}

\begin{remark}
If $L > -1$, Lemma \ref{limitlaws5}(b) implies that $\lim_n \dfrac{n^{1+c} a_{n}}{\sum_{k=1}^{n} k^{c}\,a_k}=1+c+L$ for $c \geq 0$. Similarly, if $L < -1$,  Lemma \ref{limitlaws5}(c) implies that $\lim_n \dfrac{n^{1-c} a_{n}}{\sum_{k=n}^{\infty} k^{-c}\,a_k}=-1-c-L$ for $c \geq 0$. Moreover, $\lim_n \dfrac{n^{1+c} a_{n}}{\sum_{k=1}^{n} k^{c}\,a_k}=1+c+L$ for $c+L > -1$. 
\end{remark} 

\begin{remark}
If $a_n$ is a regularly varying sequence of exponent $L \geq 0$ then its partial sum is a regularly varying sequence of exponent $L+1$.  
\end{remark}

\bibliography{ref,previous}

\end{document}